 %%%%%%%%%%%%%%%%%%%%%%% file template.tex %%%%%%%%%%%%%%%%%%%%%%%%%
%
% This is a general template file for the LaTeX package SVJour3
% for Springer journals.          Springer Heidelberg 2010/09/16
%
% Copy it to a new file with a new name and use it as the basis
% for your article. Delete % signs as needed.
%
% This template includes a few options for different layouts and
% content for various journals. Please consult a previous issue of
% your journal as needed.
%
%%%%%%%%%%%%%%%%%%%%%%%%%%%%%%%%%%%%%%%%%%%%%%%%%%%%%%%%%%%%%%%%%%%
%
% First comes an example EPS file -- just ignore it and
% proceed on the \documentclass line
% your LaTeX will extract the file if required
% [arxiv_v2: filecontents example.eps stripped, 188 chars]
\RequirePackage{fix-cm}
\documentclass[smallextended]{svjour3}       % onecolumn (second format)
\smartqed  % flush right qed marks, e.g. at end of proof
\usepackage{graphicx}
\usepackage{algorithm}
\usepackage{amsmath}
\usepackage{amsfonts}              % for blackboard bold, etc
\usepackage{epstopdf}
\usepackage{xcolor}
\usepackage{dsfont}
\usepackage{graphicx}
\usepackage{amsmath}
\usepackage{graphicx}
\usepackage{figsize}
\usepackage{subfig}
\begin{document}

\title{Numerical solution of fractional Fredholm integro-differential equations by spectral method with fractional basis functions
}
\titlerunning{Numerical solution of fractional Fredholm integro-differential equations...}

\author{Y. Talaei \and S. Noeiaghdam \and  H. Hosseinzadeh
}
\authorrunning{Y. Talaei et. al. }
\institute{Younes Talaei \at
          Faculty of Mathematical Sciences, University of Tabriz, Tabriz, Iran (Corresponding author).\\
              \email{ ytalaei@gmail.com, y$\_$talaei@tabrizu.ac.ir}
              \and
           Samad Noeiaghdam  \at
         Industrial Mathematics Laboratory, Baikal School of BRICS, Irkutsk National Research Technical
University, Irkutsk 664074, Russia. \\
Department of Applied Mathematics and Programming, South Ural State University, Lenin
prospect 76, Chelyabinsk 454080, Russia.
          \email{noiagdams@susu.ru}
           \and
              Hasan Hosseinzadeh \at
         Department of Mathematics, Ardabil Branch, Islamic Azad University, Ardabil 5615883895, Iran.
          \email{hasan$\_$hz2003@yahoo.com}
              }
\maketitle

\begin{abstract}
This paper presents an efficient spectral method for solving the fractional Fredholm integro-differential equations.
The non-smoothness of the solutions to such problems leads to the performance of spectral methods based on the classical polynomials such as Chebyshev, Legendre, Laguerre, etc, with a low order of convergence. For this reason, the development of classic numerical methods to solve such problems becomes a challenging issue.  Since the non-smooth solutions have the same asymptotic behavior with polynomials of fractional powers, therefore, fractional basis functions are the best candidate to overcome the drawbacks of the accuracy of the spectral methods.  On the other hand, the fractional integration of the fractional polynomials functions is in the class of fractional polynomials and this is one of the main advantages of using the fractional basis functions.
In this paper, an implicit spectral collocation method based on the fractional Chelyshkov basis functions is introduced.
The framework of the method is to reduce the problem into a nonlinear system of equations utilizing the spectral collocation method along with the fractional operational integration matrix. The obtained algebraic system is solved using Newton's iterative method.
Convergence analysis of the method is studied.  The numerical examples show the efficiency of the method on the problems with smooth and non-smooth solutions in comparison with other existing methods.

\keywords{Fractional integro-differential equations \and Fractional order Chelyshkov polynomials\and Spectral collocation method \and Convergence analysis}
 \subclass{65N35 \and 47G20  \and 35R11 \and 42C10}
\end{abstract}
\section{Introduction}
Fredholm integro-differential equations appear in modeling various physical processes such as neutron transport problems \cite{Martin}, neural networks \cite{Jackiewicz}, population model \cite{Kemanci}, filtering and scattering problems \cite{Hale2}, inverse problems \cite{Beilina} and diseases spread \cite{Medlock}.
Fractional differential equations are important tools in the mathematical modeling of some real-phenomena problems with memory \cite{He}. Theoretical and numerical analysis of fractional differential equations has been considered by many researchers \cite{Kilbas,Bagley,Chow,Diethelm}. Consider the fractional Fredholm integro-differential equations of the form
\begin{equation}\label{two}
\left\lbrace
\begin{array}{lr}
D_{*0}^{\alpha}y(x)=g(x)+\displaystyle\int_{0}^{1}k(x,t)f(t,y(t))dt,\ \ \ \   0< \alpha < 1,\ x\in \Omega,\\
y(0)=c,
\end{array}\right.
\end{equation}
where  $ \Omega=[0,1] $, $c \in  \mathds{R}$, the function $g\in C( \Omega)$ and $k\in C( \Omega \times  \Omega)$ are known. The given function $f$ is  continuous and satisfy the following Lipschitz condition
argument; i.e.,
\begin{equation}\label{800}
\vert f(x,y_{2})-f(x,y_{2})\vert \leq L_{f} \vert y_{2}-y_{1}\vert,\ \ \ x\in  \Omega,\ \
\end{equation}
where $L_{f}$ is a positive constant. The operator $D_{*0}^{\alpha}$ denotes the Caputo fractional differential operator with $0<\alpha<1$ (see \cite{Diethelm}). There are some numerical methods to solve the problem (\ref{two}): CAS wavelet method \cite{Saeedi}, Chebyshev wavelet method \cite{Setia}, Alpert wavelet method \cite{Hag}, Adomian decomposition method \cite{Noor}, Taylor expansion method \cite{Zhao}, fractional Lagrange basis function\cite{Kumar}, rationalized Haar functions \cite{Rahimi} and Laguerre polynomials \cite{Bayram}.

The spectral collocation methods are efficient tools in numerical investigation of linear and non-linear problems. The exponential convergence of these methods are the main reason for its prominence in solving problems with smooth solutions compared to other numerical methods \cite{Canuto,21,15,H1,samad}.
The main purpose of these methods is to find an approximate solution in terms of finite number of basis functions so that the unknown coefficients determined by minimizing the error between the exact and approximate solutions.
 Orthogonal basis polynomials are the main tool in constructing approximate solutions in spectral methods. The solution of  fractional integral and/or differential equations may be non-smooth, as its derivatives may be infinite at the left end of the interval. For this reason, the efficiency of spectral methods in solving fractional problems is reduced by using standard basic polynomials such as Legendre, Chebyshev, Hermit, and so on. Therefore, decreasing the order of convergence is one of the main disadvantages in the implementation of these methods to solve fractional problems and as a challenging problem to overcome its drawbacks. Recently, some numerical methods have been introduced by modifying the standard basis polynomials by using the change of $x$ to $x^{\nu}$, $(0<\nu<1)$ \cite{9,Conte,Cai,Talaei2,Hale,Talaeim,Talaeid,Wang}. The existence, uniqueness, and smoothness of the solution of (\ref{two}) are investigated. The framework of this paper is to convert the problem into a fractional nonlinear integral equation and present a high-order implicit collocation method for its numerical solution. Because of  the non-smooth behavior of the solutions of (\ref{two}), we utilize the fractional Chelyshkov basis function of the form
\begin{equation*}
\widetilde{C}_{N,i}(x):=C_{N,i}(x^{\nu}),\ \ \  0<\nu<1.
\end{equation*}
where $\lbrace C_{N,i}(x)\rbrace_{i=0}^{N}$ are a set of orthogonal Chelyshkov polynomials on $[0,1]$ \cite{Chel}.
The advantage of the method is the reduction of the given problem into a algebraic system of equations. Simplicity in calculating the fractional operational matrix and high accuracy of the method in solving problems with non-smooth solutions by selecting a smaller number of fractional Chelyshkov polynomials are the other main advantage of our method in comparison with some numerical methods that uses basis functions such as Chebyshev-wavelet function \cite{Setia} and Laguerre polynomials \cite{Bayram}, CAS wavelet functions \cite{Saeedi}, Alpert multi-wavelets functions \cite{Hag}, rationalized Haar function \cite{Rahimi}, and fractional Lagrange polynomials \cite{Kumar}.

The layout of this paper included as follows: Section 2 contain the
fractional Chelyshkov polynomials are investigate and operational
matrices of integration are derived and also the existence, uniqueness and smoothness of the solution of problem (\ref{two}) is studied. In Section 3, the numerical method to solve the problem (\ref{two}) is constructed. Section 4 contain the
convergence analysis of the method. Numerical examples in Section 5 demonstrate the effectiveness of the method. Final section contain a
brief review of the paper.

\section{Fractional Chelyshkov polynomials}
In 2005, Vladimir S. Chelyshkov introduced a new type of orthogonal polynomials \cite{Chel}. These polynomials have been used based on spectral method to solve various type differential and integral equations \cite{Sezer,Talaei2,Izadi,Hosseininia,Talaei,Talaei1}.
The fractional Chelyshkov polynomials defined as follows
\begin{equation}\label{jadid}
\widetilde{C}_{N,n,\nu}(x)=\sum_{j=n}^{N}(-1)^{j-n}\binom{N-n}{j-n}\binom{N+j+1}{N-n}x^{j\nu},\ \ \ n=0,1,...,N.
\end{equation}
\begin{figure}[!ht]
\begin{center}$
\begin{array}{cc}
  \includegraphics[width=0.5\linewidth]{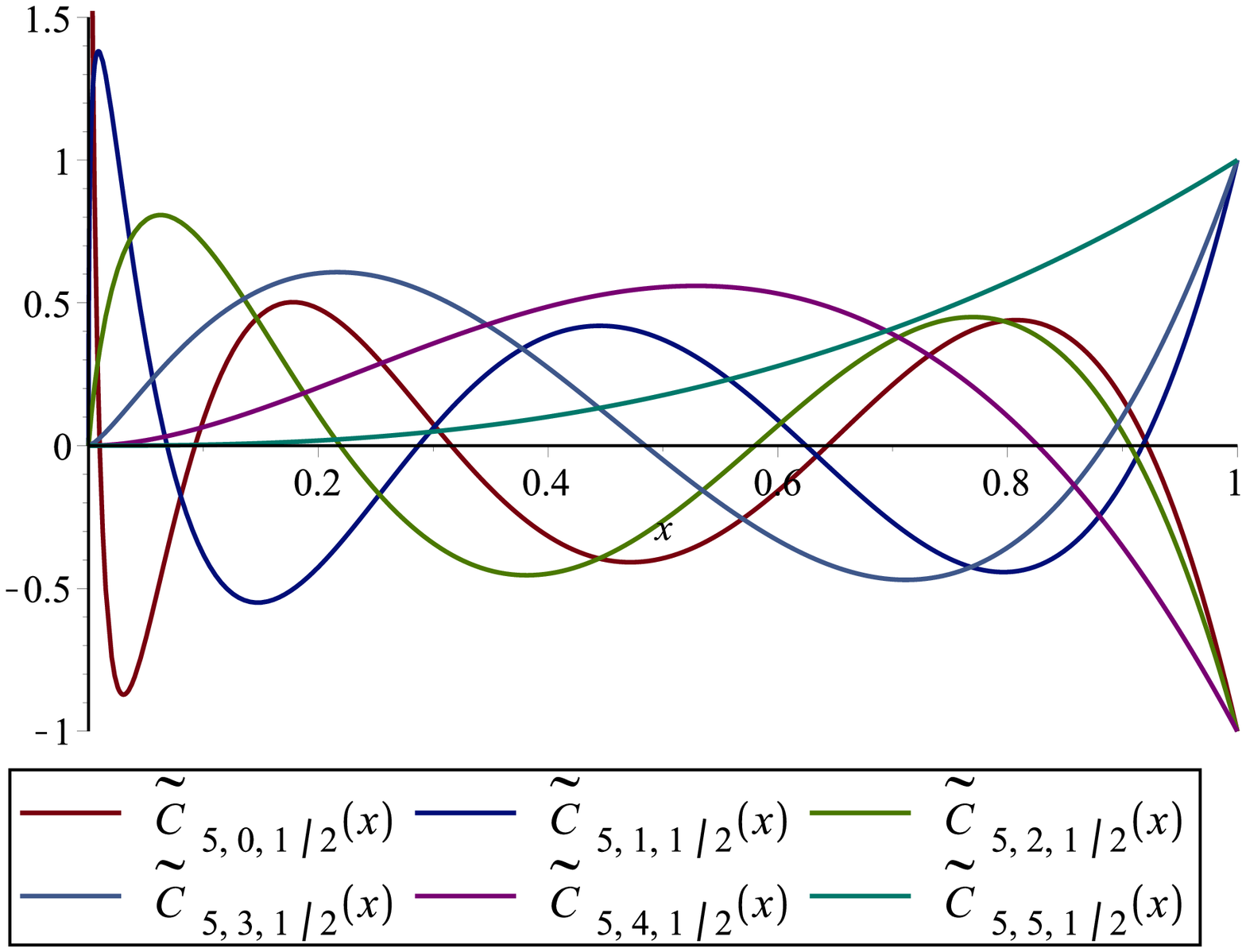}
  \includegraphics[width=0.5\linewidth]{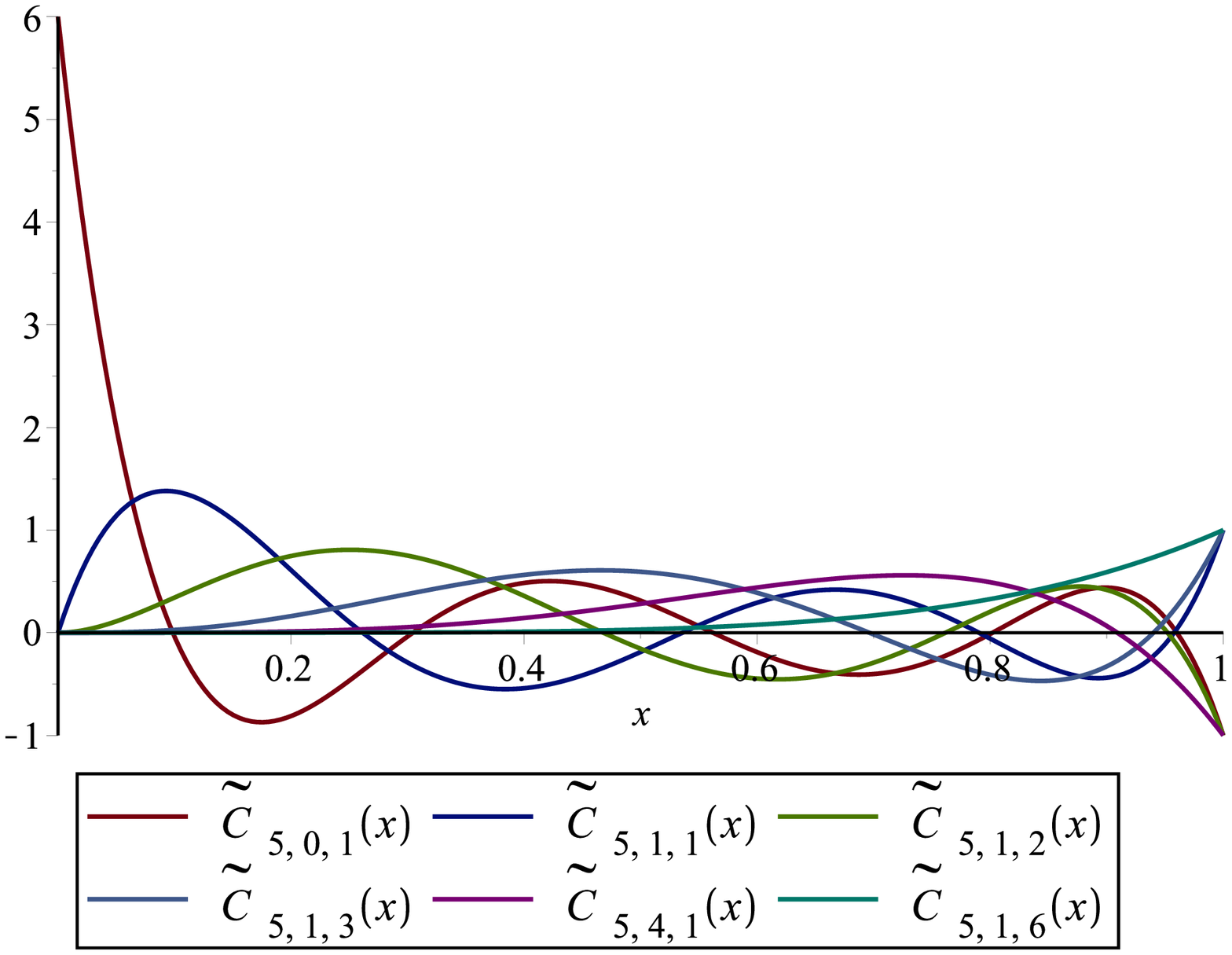}
\end{array}$
\end{center}
\caption{Plots of a fractional Chelyshkov basis: $\nu=1/2$ (left), $\nu=1$ (right) for $N=5$.}
\label{Aggn}
\end{figure}
These polynomials are connected to a set of fractional Jacobi polynomials \cite{9} $P^{a,b,\nu}_{n}(x)$, for $a,b>-1$  as
\[
\widetilde{C}_{N,n,\nu}(x)=(-1)^{N-n}x^{n\nu}P^{0,2n+1,\nu}_{N-n}(2x-1),
\]
and can be generated from the following recurrence relations
\cite{Chel}:
\begin{align}\label{ghgh}
&\widetilde{C}_{N,N,\nu}(x)=x^{N\nu},\ \ \ \widetilde{C}_{N,N-1,\nu}(x)=(2N)x^{(N-1)\nu}-(2N+1)x^{N\nu},\nonumber\\
&a_{N,k}\widetilde{C}_{N,k-1,\nu}(x)=(b_{N,k}x^{-\nu}-c_{N,k})\widetilde{C}_{N,k,\nu}(x)-d_{N,k}\widetilde{C}_{N,k+1,\nu}(x),
\end{align}
for $k=N-1,...,1$, where,
\begin{align}
&a_{N,k}=(k+1)(N-k+1)(N+k+1),\ \ \ b_{N,k}=k(2k+1)(2k+2),\nonumber\\
&c_{N,k}=(2k+1)((N+1)^2+k^2+k),\ \ \ d_{N,k}=k(N-k)(N+k+2).
\end{align}

For $N=5$, we have
\begin{align*}
&\widetilde{C}_{5,0,\nu}(x)=6-105\,{x}^{\nu}+560\,{x}^{2\,\nu}-1260\,{x}^{3\,\nu}+1260\,{
x}^{4\,\nu}-462\,{x}^{5\,\nu},\\
&\widetilde{C}_{5,1,\nu}(x)=35\,{x}^{\nu}-280\,{x}^{2\,\nu}+756\,{x}^{3\,\nu}-840\,{x}^{4
\,\nu}+330\,{x}^{5\,\nu},\\
&\widetilde{C}_{5,2,\nu}(x)=56\,{x}^{2\,\nu}-252\,{x}^{3\,\nu}+360\,{x}^{4\,\nu}-165\,{x}
^{5\,\nu},\\
&\widetilde{C}_{5,3,\nu}(x)=36\,{x}^{3\,\nu}-90\,{x}^{4\,\nu}+55\,{x}^{5\,\nu},\\
&\widetilde{C}_{5,4,\nu}(x)=10\,{x}^{4\,\nu}-11\,{x}^{5\,\nu},\\
&\widetilde{C}_{5,5,\nu}(x)={x}^{5\,\nu}.
\end{align*}
In Fig. \ref{Aggn}, the Chelyshkov polynomials $\lbrace \widetilde{C}_{5,0,\nu}(x),...,\widetilde{C}_{5,5,\nu}(x)\rbrace$ are plotted for $\nu=1,\frac{1}{2}$. All of $\widetilde{C}_{N,n,\nu}(x),\ n=0,...,N$ are polynomials exactly degree $N\nu$. The fractional Chelyshkov polynomials (\ref{jadid}) are orthogonal w.r.t weight function $\varpi(x)=x^{\nu-1}$;
 \begin{equation}\label{orto1}
\langle \widetilde{C}_{N,p,\nu}(x),\widetilde{C}_{N,q,\nu}(x)\rangle:=\int_{0}^{1}\widetilde{C}_{N,p,\nu}(x)\widetilde{C}_{N,q,\nu}(x)\varpi(x)dx=\left\lbrace
\begin{array}{lr}
 0,\ \ \ \ \ \ \ \ \ \ \ p\neq q,\\
\displaystyle\frac{1}{\nu(2p+1)}, \ \ \ \ p=q.
\end{array}\right.
 \end{equation}
Also,
\begin{equation}\label{jnnk}
\int_{0}^{1}\widetilde{C}_{N,n,\nu}(x)\varpi(x)dx=\frac{1}{\nu(N+1)}.
\end{equation}
Let
\[
L^{2}(\Omega)=\lbrace u:\Omega \rightarrow \mathds{R};\ \int_{\Omega}\vert u(x)\vert^{2}\varpi(x)dx <\infty \rbrace, \ \ \ \Vert u\Vert_{2}=\langle u,u\rangle^{\frac{1}{2}},
\]
be the weighted Hilbert space and
\[
M_{N}=Span\lbrace \widetilde{C}_{N,0,\nu}(x),\widetilde{C}_{N,1,\nu}(x),...,\widetilde{C}_{N,N,\nu}(x)\rbrace,
\]
be a finite-dimensional subspace of  $L^{2}(\Omega)$. The space $M_{N}$ is a complete subspace of $L^{2}(\Omega)$ and for any $u\in L^{2}(\Omega)$, there exist a unique best approximation $\widetilde{u}_{N}$ out of $M_{N}$ such that
\[
 \Vert u-\widehat{u}_{N} \Vert_{L^{2}} \leq \Vert u-v \Vert_{L^{2}}, \ \ \forall v \in M_{N},
\]
and implies that
\begin{equation}\label{bc}
\langle u-\widehat{u}_{N},\widetilde{C}_{N,n,\nu}(x)\rangle=0,\ \ n=0,...,N.
\end{equation}
Moreover, there exist unique coefficients $a_{0},a_{1},...,a_{N}$, such that
\begin{equation}\label{123}
\widehat{u}_{N}(x)=\sum_{n=0}^{N}a_{n}\widetilde{C}_{N,n,\nu}(x)=\mathbf{A}_{N}\mathbf{\Phi}(x),
\end{equation}
where,
\begin{equation*}
\mathbf{A}_{N}=[a_{0},a_{1},...,a_{N}],
\end{equation*}
and
\begin{equation}\label{wd}
\mathbf{\Phi}(x)=[\widetilde{C}_{N,0,\nu}(x),\widetilde{C}_{N,1,\nu}(x),...,\widetilde{C}_{N,N,\nu}(x)]^{T}.
\end{equation}
From the orthogonality condition (\ref{orto1}) and relation (\ref{bc}), we get
\begin{align*}
\int_{0}^{1}u(x)\widetilde{C}_{N,n,\nu}(x)\varpi(x)dx&=\int_{0}^{1}\widehat{u}_{N}(x)\widetilde{C}_{N,n,\nu}(x)
\varpi(x)dx\nonumber\\
&=\int_{0}^{1}\left(\sum_{i=0}^{N}a_{i}\widetilde{C}_{N,i,\nu}(x)\right)\widetilde{C}_{N,n,\nu}(x)\varpi(x)dx\nonumber\\
&=a_{n}\int_{0}^{1}\left(\widetilde{C}_{N,n,\nu}(x)\right)^{2}\varpi(x)dx,
\end{align*}
therefore,  the coefficients $a_{n}$ can be computed as
\begin{equation}\label{bvb}
a_{n}=\nu(2n+1)\int_{0}^{1}u(x)\widetilde{C}_{N,n,\nu}(x)\varpi(x)dx,\ \ \ n=0,1,...,N.
\end{equation}
\begin{lemma}\label{kl}\cite{Talaei1}
Let $x_{i}$ be the roots of $\widetilde{C}_{N,0,1}(x)$. Therefore, the fractional  Chelyshkov polynomial $\widetilde{C}_{N,0,\nu}(x)$ has $N$ roots as $x_{i}^{\frac{1}{\nu}}$ for $i=1,...,N$.
\end{lemma}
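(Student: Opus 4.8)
The plan is to reduce everything to the single algebraic identity
\[
\widetilde{C}_{N,0,\nu}(x)=\widetilde{C}_{N,0,1}(x^{\nu}),
\]
which says that the fractional polynomial is nothing but the classical Chelyshkov polynomial evaluated at $x^{\nu}$. Once this is in hand, the root correspondence is immediate: a point $\xi$ is a zero of $\widetilde{C}_{N,0,\nu}$ precisely when $\xi^{\nu}$ is a zero of $\widetilde{C}_{N,0,1}$.

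First I would establish the identity directly from the definition (\ref{jadid}). Setting $n=0$ there gives
\[
\widetilde{C}_{N,0,\nu}(x)=\sum_{j=0}^{N}(-1)^{j}\binom{N}{j}\binom{N+j+1}{N}x^{j\nu},
\]
and the same formula with $\nu=1$ gives $\widetilde{C}_{N,0,1}(x)=\sum_{j=0}^{N}(-1)^{j}\binom{N}{j}\binom{N+j+1}{N}x^{j}$. Since the two expansions differ only by the replacement of $x^{j}$ with $(x^{\nu})^{j}=x^{j\nu}$, the identity follows term by term. In other words, $\widetilde{C}_{N,0,\nu}$ is the composition of the degree-$N$ polynomial $\widetilde{C}_{N,0,1}$ with the monotone map $x\mapsto x^{\nu}$ on $\Omega$.

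Next I would exploit this composition. By hypothesis $x_{1},\dots,x_{N}$ are the roots of $\widetilde{C}_{N,0,1}$; being a genuine polynomial of degree $N$ whose roots lie in $(0,1)$, it has exactly $N$ of them, all positive, so each $x_{i}^{1/\nu}$ is a well-defined positive real. Since $(x_{i}^{1/\nu})^{\nu}=x_{i}$, evaluation yields
\[
\widetilde{C}_{N,0,\nu}\bigl(x_{i}^{1/\nu}\bigr)=\widetilde{C}_{N,0,1}(x_{i})=0,\qquad i=1,\dots,N,
\]
so each $x_{i}^{1/\nu}$ is a root of the fractional polynomial.

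Finally, to conclude that these are all the roots and that there are exactly $N$ of them, I would use that $x\mapsto x^{\nu}$ is a strictly increasing bijection of $[0,1]$ onto itself for $0<\nu<1$. Hence the correspondence $\xi\leftrightarrow\xi^{\nu}$ between zeros of $\widetilde{C}_{N,0,\nu}$ and zeros of $\widetilde{C}_{N,0,1}$ is one-to-one, and the distinctness of the $x_{i}$ transfers to the distinctness of the $x_{i}^{1/\nu}$. There is no serious obstacle here; the only point deserving a word of care is the well-definedness and positivity of $x_{i}^{1/\nu}$, which rests on the classical fact, following from the orthogonality of the Chelyshkov family, that the roots $x_{i}$ of $\widetilde{C}_{N,0,1}$ are simple and contained in $(0,1)$.
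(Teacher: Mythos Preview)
Your argument is correct and is precisely the natural one: the identity $\widetilde{C}_{N,0,\nu}(x)=\widetilde{C}_{N,0,1}(x^{\nu})$ follows term by term from~(\ref{jadid}), and since $x\mapsto x^{\nu}$ is a bijection of $[0,1]$ onto itself for $0<\nu<1$, the zero sets correspond via $x_i\leftrightarrow x_i^{1/\nu}$.

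Note, however, that the paper does not actually supply its own proof of this lemma; it merely states the result and cites~\cite{Talaei1}. So there is no in-paper argument to compare against. Your proof is the standard one and is essentially what one finds in the cited source: the substitution $x\mapsto x^{\nu}$ transports the classical Chelyshkov roots to the fractional ones.
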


In the following theorem, we derive the fractional integration operational matrix of fractional Chelyshkov polynomials:
\begin{theorem}\label{plp}
Let $\mathbf{\Phi}(x)$ be the fractional Chelyshkov polynomials vector defined in (\ref{wd}). Then,
\[
\int _{0}^{x} (x-s)^{\alpha -1}\mathbf{\Phi}(s)ds \simeq \mathcal{P} \mathbf{\Phi}(x),
\]
where
\[
\mathcal{P}=\left(
  \begin{array}{cccc}
    \Theta(0,0) & \Theta(0,1) & \ldots & \Theta(0,0) \\
    \Theta(1,0) & \Theta(1,1) & \cdots & \Theta(1,N) \\
    \vdots & \vdots & \ddots & \vdots \\
    \Theta(N,0) & \Theta(N,1) & \ldots & \Theta(N,N) \\
  \end{array}
\right),
\]
with
\begin{equation}\label{Dad}
\Theta(n,k)=\sum_{j=n}^{N}(-1)^{j-n}\binom{N-n}{j-n}\binom{N+j+1}{N-n} B(\alpha , j\nu +1)\xi_{k,j},
\end{equation}
is called the fractional operational matrix of integration. Here, $B(\cdot,\cdot)$ denotes the Beta function.
\end{theorem}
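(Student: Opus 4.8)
The plan is to establish the matrix identity one row at a time, by computing the fractional integral of a single basis function $\widetilde{C}_{N,n,\nu}$ and then projecting the result back onto $M_N$. First I would insert the explicit monomial representation (\ref{jadid}) into the integral, which reduces the problem to evaluating the elementary fractional integrals $\int_{0}^{x}(x-s)^{\alpha-1}s^{j\nu}\,ds$ for $n\leq j\leq N$. With the change of variable $s=xu$ each of these factors as
\begin{equation*}
\int_{0}^{x}(x-s)^{\alpha-1}s^{j\nu}\,ds=x^{\alpha+j\nu}\int_{0}^{1}(1-u)^{\alpha-1}u^{j\nu}\,du=B(\alpha,j\nu+1)\,x^{\alpha+j\nu},
\end{equation*}
using the definition and symmetry of the Beta function. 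Summing over $j$ then yields the exact value $\int_{0}^{x}(x-s)^{\alpha-1}\widetilde{C}_{N,n,\nu}(s)\,ds=\sum_{j=n}^{N}(-1)^{j-n}\binom{N-n}{j-n}\binom{N+j+1}{N-n}B(\alpha,j\nu+1)\,x^{\alpha+j\nu}$, a finite combination of fractional powers $x^{\alpha+j\nu}$.

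The key observation, and the source of the approximation symbol $\simeq$, is that these powers do not lie in $M_{N}$ in general, since $\alpha+j\nu$ is not of the form $k\nu$ for an integer $k$ when $\alpha$ is not a multiple of $\nu$. I would therefore replace each $x^{\alpha+j\nu}$ by its best approximation out of $M_{N}$, writing $x^{\alpha+j\nu}\simeq\sum_{k=0}^{N}\xi_{k,j}\widetilde{C}_{N,k,\nu}(x)$, with the coefficients $\xi_{k,j}$ supplied by the projection formula (\ref{bvb}), namely $\xi_{k,j}=\nu(2k+1)\int_{0}^{1}x^{\alpha+j\nu}\widetilde{C}_{N,k,\nu}(x)\varpi(x)\,dx$. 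Substituting this expansion and interchanging the two finite sums over $j$ and $k$ recovers $\int_{0}^{x}(x-s)^{\alpha-1}\widetilde{C}_{N,n,\nu}(s)\,ds\simeq\sum_{k=0}^{N}\Theta(n,k)\widetilde{C}_{N,k,\nu}(x)$ with $\Theta(n,k)$ exactly as in (\ref{Dad}). Stacking these $N+1$ scalar identities into the vector $\mathbf{\Phi}$ then gives the claimed relation with $\mathcal{P}=[\Theta(n,k)]$.

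The routine parts are the Beta-integral evaluation and the bookkeeping of binomial coefficients; the only genuinely nontrivial point is the projection step, where the exact fractional integral leaves the finite-dimensional space $M_{N}$ and must be approximated. I expect this to be the main obstacle, in the sense that it is precisely where the equality degrades to $\simeq$. Quantifying the resulting error would require the spectral approximation estimates developed later in the convergence analysis, but for the purpose of \emph{defining} the operational matrix it suffices that the $\xi_{k,j}$ are the orthogonal-projection coefficients given above, so that $\mathcal{P}$ is well defined and the stated formula (\ref{Dad}) holds.
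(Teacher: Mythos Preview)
Your argument is correct and follows essentially the same route as the paper: compute $\int_{0}^{x}(x-s)^{\alpha-1}\widetilde{C}_{N,n,\nu}(s)\,ds$ term-by-term via the Beta integral to obtain a combination of $x^{j\nu+\alpha}$, then project each such power onto $M_{N}$ using the coefficients $\xi_{k,j}$ from (\ref{bvb}) and collect into $\Theta(n,k)$. The only cosmetic difference is that the paper also writes out $\xi_{k,j}$ explicitly as a finite sum by expanding $\widetilde{C}_{N,k,\nu}$ inside the inner product, but this is a routine calculation that you have correctly identified as following from (\ref{bvb}).
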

\begin{proof}
Integrating of $\widetilde{C}_{N,n,\nu}(x)$ from $0$ to $x$ yields
%\begin{footnotesize}
\begin{equation}\label{treee}
\int_{0}^{x}(x-s)^{\alpha -1}\widetilde{C}_{N,n,\nu}(s)ds=\sum_{j=n}^{N}(-1)^{j-n}\binom{N-n}{j-n}\binom{N+j+1}{N-n} B(\alpha , j\nu+1)x^{j\nu+\alpha}.
\end{equation}
%\end{footnotesize}
Approximating $x^{j\nu+\alpha}$ in terms of fractional Chelyshkov polynomials, we get
\begin{equation}\label{Za}
x^{j\nu+\alpha}\simeq \sum_{k=0}^{N}\xi_{k,j}\widetilde{C}_{N,k,\nu}(x),
\end{equation}
where the coefficients $\xi_{k,j}$ can be computed using (\ref{bvb}) as
\begin{align}\label{123}
\xi_{k,j}&=\nu(2k+1) \displaystyle\int_{0}^{1}x^{j\nu+\alpha}\widetilde{C}_{N,k,\nu}(x)\varpi(x)dx,\nonumber \\
&=\nu(2k+1) \displaystyle\sum_{l=k}^{N}(-1)^{l-k}\binom{N-k}{l-k}\binom{N+l+1}{N-k}\int_{0}^{1}x^{(j+l+1)\nu+\alpha-1}dx,\nonumber \\
&=\nu(2k+1)\displaystyle\sum_{l=k}^{N}\frac{(-1)^{l-k}}{(j+l+1)\nu+\alpha}\binom{N-k}{l-k}\binom{N+l+1}{N-k}.
\end{align}
By substituting (\ref{Za}), (\ref{123}) in (\ref{treee}), we have
%\begin{footnotesize}
\begin{align}\label{123t}
\int_{0}^{x}(x-s)^{\alpha -1}\widetilde{C}_{N,n,\nu}(s)ds\simeq\sum_{k=0}^{N}\Theta(n,k)\widetilde{C}_{N,k,\nu}(x).
\end{align}
Thus, the proof is completed.
\end{proof}
Now, we study the existence, uniqueness and smoothness of the solution to the problem (\ref{two}):
\begin{definition}\cite{Diethelm}
The Riemann--Liouville fractional integral of order $ \alpha $ for any $u\in L_{1}[a,b]$ is defined as
 \begin{equation}
J_{a}^{\alpha}u(x):=\frac{1}{\Gamma (\alpha)}\int_{a}^{x}(x-t)^{\alpha-1}u(t)dt.
 \end{equation}
For $\alpha=0$, we have  $J_{a}^{0}:=I$ the identity operator.\\
\end{definition}
\begin{definition}\cite{Diethelm}
 The operator $D_{*a}^{\alpha}$ defined by
\begin{equation}
D_{*a}^{\alpha}u(x):=J_{a}^{\lceil \alpha \rceil-\alpha}D^{\lceil \alpha \rceil}u(x)=\frac{1}{\Gamma (\lceil \alpha \rceil-\alpha)}\int_{a}^{x}(x-t)^{\lceil \alpha \rceil-\alpha-1}u^{\lceil \alpha \rceil}(t)dt
 \end{equation}
is called the  Caputo differential operator of order $\alpha\in \mathds{R}_{+}$.
\end{definition}

 \begin{theorem}\cite{fixed,Kantorovich} (Banach's Fixed Point Theorem)\label{A1}
Let $(\mathcal{U},d)$ to be a complete metric space and $ 0 \leq \gamma <1$. Also, assume that the function $ A:\mathcal{U} \rightarrow \mathcal{U}$ satisfy the inequality
\begin{equation}
 d(Au,Av)\leq \gamma d(u,v),
 \end{equation}
for every $u,v\in \mathcal{U}$. Then, there exists a unique $u^{*}$ such that $u^{*}=A(u^{*})$. Furthermore, for any $u_{0} \in \mathcal{U}$ we have
\[
A^{j}u_{0}\rightarrow u^{*},\ \ j\rightarrow \infty,
\]
that is the unique solution of the problem $u=Au$.
\begin{definition}\cite{Bru}
The space $C^{m, \lambda}(0,1]$, $m\in \mathds{N}$,  $-\infty<\lambda<1$, define a  set of all $m$ times continuously differentiable
functions $u:(0,1]\rightarrow \mathds{R}$ such that
\begin{equation}
\vert u^{(i)}(x)\vert \leq c_{i}
  \begin{cases}
   1, \quad i<1-\lambda, \\
      1+\vert \log(x)\vert, \quad i=1-\lambda, \\
    x^{1-\lambda-i}, \quad i>1-\lambda,
  \end{cases}
\end{equation}
holds with a constant $c_{i}$ for $i=0,...,m$.
\end{definition}
 \end{theorem}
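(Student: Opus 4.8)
The plan is to establish existence, uniqueness, and the convergence claim by the classical method of successive approximations (Picard iteration), with the contraction ratio $\gamma<1$ doing all the work. Starting from an arbitrary point $u_{0}\in\mathcal{U}$, I define the sequence of iterates $u_{j+1}:=Au_{j}$ and first quantify how fast consecutive terms close up. A one-line induction on the contraction hypothesis gives $d(u_{j+1},u_{j})\leq\gamma^{j}\,d(u_{1},u_{0})$ for every $j\geq 0$.

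From this geometric decay I would show that $\{u_{j}\}$ is a Cauchy sequence. For indices $m>n$, the triangle inequality together with the previous bound yields $d(u_{m},u_{n})\leq\sum_{k=n}^{m-1}d(u_{k+1},u_{k})\leq d(u_{1},u_{0})\,\frac{\gamma^{n}}{1-\gamma}$, and because $0\leq\gamma<1$ the tail $\gamma^{n}/(1-\gamma)\to 0$ as $n\to\infty$, uniformly in $m$. Completeness of $(\mathcal{U},d)$ then provides a limit $u^{*}\in\mathcal{U}$ with $u_{j}\to u^{*}$, which is exactly the asserted convergence $A^{j}u_{0}\to u^{*}$.

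Next I would identify $u^{*}$ as a fixed point. Since a contraction is in particular Lipschitz, hence continuous, I may pass to the limit in the recursion $u_{j+1}=Au_{j}$ to obtain $u^{*}=Au^{*}$. Uniqueness then follows from the contraction estimate alone: if $u^{*}=Au^{*}$ and $v^{*}=Av^{*}$, then $d(u^{*},v^{*})=d(Au^{*},Av^{*})\leq\gamma\,d(u^{*},v^{*})$, so $(1-\gamma)\,d(u^{*},v^{*})\leq 0$ and therefore $d(u^{*},v^{*})=0$, i.e.\ $u^{*}=v^{*}$.

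The argument is entirely classical, so there is no genuine obstacle; the only step demanding a little care is the Cauchy estimate, where the geometric tail must be bounded uniformly in $m$ and the strict inequality $\gamma<1$ is essential to make $\gamma^{n}/(1-\gamma)$ vanish. Completeness is precisely what upgrades this Cauchy property to an actual limit, while the automatic continuity of $A$ is what promotes that limit to a genuine fixed point.
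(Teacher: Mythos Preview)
Your argument is the standard, correct proof of Banach's Fixed Point Theorem via Picard iteration: geometric decay of consecutive distances, a telescoping Cauchy estimate with tail $\gamma^{n}/(1-\gamma)$, completeness to produce the limit, continuity of the contraction to identify the fixed point, and the contraction inequality again for uniqueness. There is no gap.

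However, there is nothing to compare against: the paper does not supply its own proof of this statement. The theorem is quoted from the references \cite{fixed,Kantorovich} and used as a black box (it is invoked later in the proof of Theorem~\ref{B} to conclude that the operator $A$ in \eqref{HGH} has a unique fixed point). So your proposal is not an alternative route but rather a complete proof where the paper simply cites the literature.
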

By applying fractional integral operator $J_{0}^{\alpha}$ on both sides of the Eq. (\ref{two}), we obtain
\begin{align}\label{3}
 y(x)=\widetilde{g}(x)+ \int_{0}^{x}(x-t)^{\alpha-1}\widetilde{K}(y(t))dt,
\end{align}
where
\begin{equation}\label{has}
\widetilde{K}y(x)=\frac{1}{\Gamma (\alpha)}\int_{0}^{1}k(x,z)f(z,y(z))dz
\end{equation}
and
\begin{equation}\label{kok}
\widetilde{g}(x)=c+\frac{1}{\Gamma (\alpha)}\int_{0}^{x}(x-t)^{\alpha-1}g(t)dt.
\end{equation}
Therefore, the problem (\ref{two}) is equivalent to a fractional nonlinear Volterra integral equation of the form Eq. (\ref{3}). In the following, we consider the existence, uniqueness and smoothness of the solution of Eq. (\ref{3}).
\begin{theorem}\label{B} Assume that the function $f$ satisfy Lipschitz condition (\ref{800}) and
\[
\mathcal{M}_{k}:=\underset{x,t \in \Omega }{\max} \vert k(x,t)\vert.
\]
If
\begin{equation}\label{6}
 L_{f}\mathcal{M}_{k}<\Gamma (\alpha+1),
\end{equation}
 then, the Eq. (\ref{3}) has a unique continuous solution on $\Omega$.
\end{theorem}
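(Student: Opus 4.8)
The plan is to apply the Banach Fixed Point Theorem (Theorem~\ref{A1}) on the complete metric space $\mathcal{U}=C(\Omega)$ equipped with the supremum norm $\Vert u\Vert_{\infty}=\max_{x\in\Omega}\vert u(x)\vert$. First I would define the operator $A:C(\Omega)\rightarrow C(\Omega)$ by
\[
(Ay)(x)=\widetilde{g}(x)+\int_{0}^{x}(x-t)^{\alpha-1}(\widetilde{K}y)(t)\,dt,
\]
so that a fixed point of $A$ is precisely a continuous solution of Eq.~(\ref{3}). The argument then splits into showing that $A$ maps $C(\Omega)$ into itself and that $A$ is a contraction, with a contraction constant governed exactly by the hypothesis (\ref{6}).

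For the self-mapping property, I would observe that $\widetilde{g}$ is continuous as the sum of a constant and the Riemann--Liouville integral of the continuous function $g$, and that $\widetilde{K}y$ is continuous whenever $y$ is, since $k$ is continuous on $\Omega\times\Omega$ and $f$ is continuous. The remaining point is that the weakly singular convolution $\int_{0}^{x}(x-t)^{\alpha-1}(\widetilde{K}y)(t)\,dt$ is continuous in $x$; because $0<\alpha<1$ the kernel $(x-t)^{\alpha-1}$ is integrable, and the identity $\int_{0}^{x}(x-t)^{\alpha-1}\,dt=x^{\alpha}/\alpha$ together with uniform continuity of $\widetilde{K}y$ on the compact interval $\Omega$ yields the desired continuity.

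For the contraction estimate, take $y_{1},y_{2}\in C(\Omega)$. Using the Lipschitz condition (\ref{800}) and the uniform bound $\mathcal{M}_{k}$ on the kernel, I would first obtain the pointwise estimate
\[
\vert(\widetilde{K}y_{1})(t)-(\widetilde{K}y_{2})(t)\vert\leq\frac{\mathcal{M}_{k}L_{f}}{\Gamma(\alpha)}\Vert y_{1}-y_{2}\Vert_{\infty},
\]
uniformly in $t\in\Omega$. Substituting this into the definition of $A$ and using $\int_{0}^{x}(x-t)^{\alpha-1}\,dt=x^{\alpha}/\alpha\leq 1/\alpha$ on $\Omega$, together with the identity $\alpha\,\Gamma(\alpha)=\Gamma(\alpha+1)$, gives
\[
\Vert Ay_{1}-Ay_{2}\Vert_{\infty}\leq\frac{L_{f}\mathcal{M}_{k}}{\Gamma(\alpha+1)}\Vert y_{1}-y_{2}\Vert_{\infty}.
\]
The hypothesis (\ref{6}) makes the constant $\gamma:=L_{f}\mathcal{M}_{k}/\Gamma(\alpha+1)$ strictly less than one, so $A$ is a contraction and Theorem~\ref{A1} produces a unique fixed point $y^{*}\in C(\Omega)$, which is the unique continuous solution of Eq.~(\ref{3}).

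The main obstacle I anticipate is the self-mapping step rather than the contraction bound: verifying continuity of the fractional-integral term requires handling the weak singularity of $(x-t)^{\alpha-1}$ with some care, for instance by splitting the integral near $t=x$ or by invoking uniform continuity of $\widetilde{K}y$. The contraction estimate itself is routine once the uniform Lipschitz bound on $\widetilde{K}$ is established, and the decisive cancellation is simply $\alpha\,\Gamma(\alpha)=\Gamma(\alpha+1)$, which converts the crude kernel estimate into precisely the threshold appearing in (\ref{6}).
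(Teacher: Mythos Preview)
Your proposal is correct and follows essentially the same route as the paper: define the operator $A$ on $C(\Omega)$ with the sup norm, use the Lipschitz bound on $f$ and the kernel bound $\mathcal{M}_k$ to estimate $\widetilde{K}y_1-\widetilde{K}y_2$, integrate the weakly singular kernel to pick up the factor $1/(\alpha\Gamma(\alpha))=1/\Gamma(\alpha+1)$, and invoke Banach's fixed point theorem under hypothesis~(\ref{6}). If anything, you are slightly more careful than the paper, which simply asserts $A:C(\Omega)\to C(\Omega)$ without discussing continuity of the fractional integral term.
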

\begin{proof}
Define the operator $A:C(\Omega)\rightarrow C(\Omega)$ as
 \begin{equation}\label{HGH}
 (Ay)(x):=\widetilde{g}(x)+ \int_{0}^{x}(x-t)^{\alpha-1}\widetilde{K}(y(t))dt.
\end{equation}
It is enough to prove that it has a unique fixed point. To this end, by using the Lipschitz condition for $f$ we obtain
\begin{align*}
 \vert(Ay)(x)-(A\widehat{y})(x)\vert &\leq \int_{0}^{x}(x-t)^{\alpha-1}\vert \widetilde{K}(y(t))-\widetilde{K}(\widehat{y}(t))\vert dt\\
 &\leq \frac{\mathcal{M}_{k}}{\Gamma (\alpha)} \int_{0}^{x}(x-t)^{\alpha-1}\left[\int_{0}^{1}\vert f(z,y(z))-f(z,\widehat{y}(z))\vert dz\right]dt\\
 &  \leq \frac{L_{f}\mathcal{M}_{k}}{\Gamma (\alpha)}\Vert y-\widehat{y} \Vert_{\infty} \int_{0}^{x}(x-t)^{\alpha-1} dt\\
&\leq   \frac{L_{f}\mathcal{M}_{k}}{\Gamma (\alpha+1)}\Vert y-\widehat{y}\Vert_{\infty},
\end{align*}
which implies that $A$ is a contraction mapping if only if
\[
\frac{L_{f}\mathcal{M}_{k}}{\Gamma (\alpha+1)}<1.
\]
From Theorem \ref{A1}, the operator $A$ defined in (\ref{HGH}) has a unique fixed point.
\end{proof}

\begin{theorem}\label{jko} Let $\widetilde{g}\in C^{m,1-\alpha}(\Omega)$,  $\widetilde{K}y\in C^{m}(\mathds{R})$,  $m \in \mathds{N}$ and $0<\alpha<1$. Then, the solution of Eq. (\ref{3}) satisfy the smoothness properties as follows
\begin{equation}
\vert y^{(i)}(x)\vert =O(x^{\alpha-i}),\ \ \ \  x\in (0,1],
\end{equation}
for $i=1,...,m$.
\end{theorem}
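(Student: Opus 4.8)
The plan is to exploit the fact that Eq.~(\ref{3}) writes $y$ as the sum of the data term $\widetilde{g}$ and a Riemann--Liouville fractional integral of the smooth quantity $H:=\widetilde{K}y$. Recalling the definition of $J_0^{\alpha}$, Eq.~(\ref{3}) becomes
\[
y(x)=\widetilde{g}(x)+\Gamma(\alpha)\,J_{0}^{\alpha}H(x),
\]
so that $y^{(i)}=\widetilde{g}^{(i)}+\Gamma(\alpha)\,\bigl(J_0^\alpha H\bigr)^{(i)}$ and it suffices to bound each summand separately. The term $\widetilde{g}$ requires no work: since $\widetilde{g}\in C^{m,1-\alpha}(\Omega)$ and $1-(1-\alpha)=\alpha<1\le i$ for every $i=1,\dots,m$, the third branch of the definition of $C^{m,\lambda}$ gives at once $|\widetilde{g}^{(i)}(x)|\le c_i\,x^{\alpha-i}$, which already has the claimed order. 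Hence the real content lies in the fractional-integral term.

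First I would establish a differentiation rule for $J_0^\alpha H$. Differentiating directly under the integral sign fails, because it produces the non-integrable kernel $(x-t)^{\alpha-1-i}$; this is the main obstacle. I would circumvent it by the substitution $u=x-t$, which rewrites $J_0^\alpha H(x)=\tfrac{1}{\Gamma(\alpha)}\int_0^x u^{\alpha-1}H(x-u)\,du$ and transfers the entire $x$-dependence into the argument of the smooth factor $H$. Each differentiation then moves one derivative onto $H$ and peels off a boundary contribution from the upper limit. Carrying this out $i$ times by induction (using $H\in C^m$, $i\le m$, and the identity $(\alpha-1)/\Gamma(\alpha)=1/\Gamma(\alpha-1)$ to tidy the coefficients) yields
\[
\bigl(J_0^\alpha H\bigr)^{(i)}(x)=\sum_{j=0}^{i-1}\frac{H^{(j)}(0)}{\Gamma(\alpha-j)}\,x^{\alpha-1-j}+J_0^\alpha H^{(i)}(x).
\]

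It then remains to estimate the right-hand side on $(0,1]$. For $0\le j\le i-1$ one has $\alpha-1-j\ge\alpha-i$, so $x^{\alpha-1-j}\le x^{\alpha-i}$ for $x\in(0,1]$, whence every boundary term is $O(x^{\alpha-i})$. For the remainder, continuity of $H^{(i)}$ on $[0,1]$ gives the crude bound $|J_0^\alpha H^{(i)}(x)|\le \tfrac{\|H^{(i)}\|_\infty}{\Gamma(\alpha+1)}x^{\alpha}\le \tfrac{\|H^{(i)}\|_\infty}{\Gamma(\alpha+1)}x^{\alpha-i}$, which is again $O(x^{\alpha-i})$. Combining these estimates with the bound on $\widetilde{g}^{(i)}$ gives $|y^{(i)}(x)|=O(x^{\alpha-i})$ for $i=1,\dots,m$, completing the argument. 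The only point demanding care beyond the singular-kernel issue already noted is to confirm that the hypothesis $\widetilde{K}y\in C^m(\mathds{R})$ genuinely makes $H$ and all the derivatives $H^{(j)}(0)$ well defined, which is precisely what the continuity and Lipschitz assumptions on $k$ and $f$ secure.
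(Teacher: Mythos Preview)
Your argument is sound and, unlike the paper, actually supplies a proof: the paper does not argue the result at all but simply refers the reader to Theorem~2.1 of Brunner--Pedas--Vainikko \cite{Bru}, which treats general nonlinear weakly singular Volterra equations. Your route is more elementary and self-contained: because Eq.~(\ref{3}) is of the special form $y=\widetilde{g}+\Gamma(\alpha)J_0^{\alpha}H$ with $H=\widetilde{K}y$ already assumed $C^m$, you bypass the full machinery of \cite{Bru} and reduce everything to differentiating a Riemann--Liouville integral of a smooth function. What this buys you is a short, transparent proof tailored to the structure of the problem; what the citation buys the paper is that \cite{Bru} places hypotheses on the \emph{data} (kernel and nonlinearity) rather than on the solution-dependent quantity $\widetilde{K}y$, which is arguably more natural.

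One small correction that does not affect your conclusion: the displayed identity for $(J_0^{\alpha}H)^{(i)}$ has the indices mispaired. Carrying out the induction as you describe actually gives
\[
(J_0^{\alpha}H)^{(i)}(x)=\sum_{j=0}^{i-1}\frac{H^{(j)}(0)}{\Gamma(\alpha+j+1-i)}\,x^{\alpha+j-i}+J_0^{\alpha}H^{(i)}(x),
\]
i.e.\ the \emph{higher} derivatives of $H$ at $0$ accompany the \emph{milder} powers of $x$ (check $i=2$: the $H(0)$ term carries $x^{\alpha-2}$, not $x^{\alpha-1}$). Since your subsequent estimate only uses that every boundary term is a constant times $x^{\alpha-k}$ with $1\le k\le i$, and hence $O(x^{\alpha-i})$ on $(0,1]$, the slip is cosmetic and the bound $|y^{(i)}(x)|=O(x^{\alpha-i})$ follows exactly as you wrote.
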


\begin{proof}
For the proof see Theorem 2.1 in \cite{Bru}.
\end{proof}

The result of Theorem \ref{jko} implies that the solution of Eq. (\ref{3}) has a singularity at the origin as  $t\rightarrow 0^{+}$ which indicates deterioration of the accuracy of many existing spectral methods based on standard basis functions such as Chebyshev, Legendre, etc. In order to overcome such drawbacks, we utilize the fractional Chelyshkov polynomials as basis functions to obtain the fractional order behavior and consistency between the approximate and exact solutions.
\section{Numerical method}
According to the implicit version of the collocation method in \cite{Sloan}, we consider the transformation
\begin{equation}\label{34}
w(x)=\widetilde{K}y(x),
\end{equation}
to approximate the solutions of Eq. (\ref{3}). By using (\ref{3}) and (\ref{has}), $w$ satisfy the following equation
\begin{align}\label{3vbbb}
w(x)=\frac{1}{\Gamma
(\alpha)}\int_{0}^{1}k(x,z)f\bigg(z,\widetilde{g}(z)+\displaystyle\int_{0}^{z}(z-t)^{\alpha-1}w(t)dt\bigg)dz.
\end{align}
After determining the unknown function $w$, then we can obtain a solution of the  Eq. (\ref{3}) by
\begin{align}\label{3vbbbv}
  y(x)=\widetilde{g}(x)+\int_{0}^{x}(x-t)^{\alpha-1}w(t)dt.
\end{align}
Now, we focus on the implementation of a spectral collocation method to solve Eq. (\ref{3vbbb}). To this end, we approximate $w(x)$ by the fractional Chelyshkov polynomials as
\begin{equation}\label{34}
w(x)\simeq w_{N}(x)=\sum_{i=0}^{N}w_{i}\widetilde{C}_{N,i,\nu}(x)=\mathbf{W}_{N}\mathbf{\Phi}(x),
\end{equation}
where, $\mathbf{W}_{N}=[w_{0},...,w_{N}]$ is an unknown vector.
\begin{theorem}\label{popp}
Let $w_{N}$ be the approximation defined in (\ref{34}) and $\mathcal{P}$ be fractional operational matrix defined in Theorem \ref{plp}, then we have
\begin{itemize}
\item[(A)] $\displaystyle\int_{0}^{x}(x-t)^{\alpha-1}w_{N}(t)dt\simeq \widehat{\mathbf{W}}_{N}\mathbf{\Phi}(x)$,\\
\item[(B)] $\widetilde{g}(x)\simeq (\mathcal{C}+\mathcal{G})\mathbf{\Phi}(x)$,
\end{itemize}
in which $\mathcal{C}:=[\mathcal{C}_{0},...,\mathcal{C}_{N}]$, $\widetilde{\mathbf{G}}:=[g_{0},...,g_{N}]$,
with
\[
\mathcal{C}_{i}=\frac{c(2i+1)}{N+1},\ \ g_{i}=\frac{\nu(2i+1)}{\Gamma(\alpha)} \int_{0}^{1} \widetilde{C}_{N,i,\nu}(x)g(x)\varpi(x)dx,
\]
for $i=0,...,N$, and $\widehat{\mathbf{W}}_{N}=\mathbf{W}_{N} \mathcal{P}$, $\mathcal{G}=\widetilde{\mathbf{G}}\mathcal{P}$.
\end{theorem}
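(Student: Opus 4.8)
The plan is to prove both claims by linearity, reducing each to a single invocation of the operational matrix from Theorem~\ref{plp} together with the coefficient formula (\ref{bvb}). For part (A), I would write $w_{N}(x)=\mathbf{W}_{N}\mathbf{\Phi}(x)$ and pull the constant row vector $\mathbf{W}_{N}$ outside the integral, so that
\[
\int_{0}^{x}(x-t)^{\alpha-1}w_{N}(t)\,dt=\mathbf{W}_{N}\int_{0}^{x}(x-t)^{\alpha-1}\mathbf{\Phi}(t)\,dt.
\]
Theorem~\ref{plp} replaces the remaining integral by $\mathcal{P}\mathbf{\Phi}(x)$, which gives $\mathbf{W}_{N}\mathcal{P}\mathbf{\Phi}(x)=\widehat{\mathbf{W}}_{N}\mathbf{\Phi}(x)$ by the definition $\widehat{\mathbf{W}}_{N}=\mathbf{W}_{N}\mathcal{P}$. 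This part is essentially immediate.

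For part (B), I would split $\widetilde{g}$ according to (\ref{kok}) into its constant part $c$ and its fractional-integral part $\frac{1}{\Gamma(\alpha)}\int_{0}^{x}(x-t)^{\alpha-1}g(t)\,dt$, treating each separately. For the constant part, I would expand $c$ in the basis via (\ref{bvb}) with $u\equiv c$: the constant factors out of the integral and (\ref{jnnk}) evaluates the remaining moment as $\int_{0}^{1}\widetilde{C}_{N,i,\nu}(x)\varpi(x)\,dx=\frac{1}{\nu(N+1)}$, which yields precisely $\mathcal{C}_{i}=\frac{c(2i+1)}{N+1}$ and hence $c=\mathcal{C}\mathbf{\Phi}(x)$. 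Since the constant function lies in $M_{N}$, this is in fact an exact identity.

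For the integral part, I would first expand the integrand (including the normalizing factor) as $g(t)/\Gamma(\alpha)\simeq\widetilde{\mathbf{G}}\mathbf{\Phi}(t)$, whose coefficients read off from (\ref{bvb}) are exactly $g_{i}=\frac{\nu(2i+1)}{\Gamma(\alpha)}\int_{0}^{1}\widetilde{C}_{N,i,\nu}(x)g(x)\varpi(x)\,dx$. Substituting this expansion and repeating the argument of part (A) through Theorem~\ref{plp} gives
\[
\frac{1}{\Gamma(\alpha)}\int_{0}^{x}(x-t)^{\alpha-1}g(t)\,dt\simeq\widetilde{\mathbf{G}}\,\mathcal{P}\,\mathbf{\Phi}(x)=\mathcal{G}\mathbf{\Phi}(x),
\]
where $\mathcal{G}=\widetilde{\mathbf{G}}\mathcal{P}$. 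Adding the two contributions yields $\widetilde{g}(x)\simeq(\mathcal{C}+\mathcal{G})\mathbf{\Phi}(x)$.

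The proof presents no genuine obstacle and is largely bookkeeping; the one point demanding care is the placement of the factor $1/\Gamma(\alpha)$. Because the operational matrix $\mathcal{P}$ encodes only the kernel $(x-t)^{\alpha-1}$ and not the normalization, the $1/\Gamma(\alpha)$ must be absorbed into the coefficients $g_{i}$ (so that $\widetilde{\mathbf{G}}$ expands $g/\Gamma(\alpha)$, not $g$); keeping this bookkeeping consistent is what makes the definitions of $\mathcal{G}$ and $g_{i}$ match.
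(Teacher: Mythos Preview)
Your proposal is correct and follows essentially the same approach as the paper: both parts are handled by writing the relevant function as a row vector times $\mathbf{\Phi}$, pulling the row vector through the integral, and invoking Theorem~\ref{plp}, with the coefficients $\mathcal{C}_{i}$ and $g_{i}$ computed via (\ref{bvb}) and (\ref{jnnk}). Your remark that $1/\Gamma(\alpha)$ must be absorbed into $\widetilde{\mathbf{G}}$ (so that it expands $g/\Gamma(\alpha)$, not $g$) matches the paper exactly.
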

\begin{proof}
Part (A): \\
From Theorem \ref{plp}, we have
 \begin{align}\label{vol}
 \int_{0}^{z}(z-t)^{\alpha-1}w_{N}(t)dt&=\mathbf{W}_{N}\int_{0}^{z}(z-t)^{\alpha-1}\mathbf{\Phi}(t)dt \nonumber\\
 &\simeq \mathbf{W}_{N} \mathcal{P} \mathbf{\Phi}(x)\nonumber\\
 &=\widehat{\mathbf{W}}_{N}\mathbf{\Phi}(z).
 \end{align}
Part (B):\\
Consider to (\ref{kok}), and let $\frac{1}{\Gamma(\alpha)}g(x)\simeq
\widetilde{\mathbf{G}}\mathbf{\Phi}(x)$,
$c=\mathcal{C}\mathbf{\Phi}(x)$, from (\ref{jnnk}) and (\ref{bvb}), we get
\[
\mathcal{C}_{i}=\nu(2i+1) \int_{0}^{1} c\
\widetilde{C}_{N,i,\nu}(x) \varpi(x)dx=\frac{c(2i+1)}{N+1},\ \
\]
and
\[
g_{i}=\frac{\nu(2i+1)}{\Gamma(\alpha)} \int_{0}^{1} \widetilde{C}_{N,i,\nu}(x)g(x)\varpi(x)dx.
\]
By using Theorem \ref{plp}, we can
write
\begin{align*}
\widetilde{g}(x)&=c+\frac{1}{\Gamma (\alpha)}\int_{0}^{x}(x-t)^{\alpha-1}g(t)dt\nonumber\\
&=\mathcal{C}\mathbf{\Phi}(x)+\int_{0}^{x}(x-t)^{\alpha-1}\widetilde{\mathbf{G}}\mathbf{\Phi}(t)dt\nonumber\\
& \simeq \mathcal{C}\mathbf{\Phi}(x)+\widetilde{\mathbf{G}}\int_{0}^{x}(x-t)^{\alpha-1}\mathbf{\Phi}(t)dt\nonumber\\
&=\left(\mathcal{C}+\widetilde{\mathbf{G}}\mathcal{P}\right) \mathbf{\Phi}(x)\nonumber\\
&=\left(\mathcal{C}+\mathcal{G}\right) \mathbf{\Phi}(x),
\end{align*}
which completes the proof.
\end{proof}
Assume that $\ell^{\nu}_{i}(x)$ be the fractional Lagrange basis function associated with the points $\widehat{x}_{i}=x_{i}^{\frac{1}{\nu}},\ i=0,...,N$, the roots of the polynomial $\widetilde{C}_{N+1,0}(x)$,  and define the fractional interpolation operator as
\begin{equation}\label{daroon}
\mathcal{I}_{N}u(x)=\sum_{i=0}^{N}u(\widehat{x}_{i})\ell^{\nu}_{i}(x).
\end{equation}
By substituting $w_{N}(x)$ in Eq. (\ref{3vbbb}) and applying $\mathcal{I}_{N}$ we obtain
\begin{equation}\label{pgj}
\mathcal{I}_{N}w_{N}(x)=\mathcal{I}_{N}\bigg(\frac{1}{\Gamma
(\alpha)}\int_{0}^{1}k(x,z)f\bigg(z,\widetilde{g}(z)+\displaystyle\int_{0}^{z}(z-t)^{\alpha-1}w_{N}(t)dt\bigg)dz\bigg),
\end{equation}
consequently,
\begin{align}\label{3vvvd}
w_{N}(\widehat{x}_{i})=\frac{1}{\Gamma
(\alpha)}\int_{0}^{1}k(\widehat{x}_{i},z)f\bigg(z,\widetilde{g}(z)+\displaystyle\int_{0}^{z}(z-t)^{\alpha-1}w_{N}(t)dt\bigg)dz.
\end{align}
By using Theorem \ref{popp} in Eq. (\ref{3vvvd}), we obtain
\begin{equation}\label{qq}
\mathbf{W}_{N}\mathbf{\Phi}(\widehat{x}_{i})=\frac{1}{\Gamma (\alpha)}\int_{0}^{1}k(\widehat{x}_{i},z)f\bigg(z,\big(\mathcal{C}+\mathcal{G}+\widehat{\mathbf{W}}_{N}\big)\mathbf{\Phi}(z)\bigg)dz.
\end{equation}
The integral term in (\ref{qq}) approximated by Gauss-Legendre quadrature formula \cite{Canuto} on $[0,1]$ with the weights and nodes
$(z_{\ell},\omega_{\ell})_{\ell=0}^{N}$,
\begin{equation}\label{908}
\int_{0}^{1}k(\widehat{x}_{i},z)\mathcal{H}(z)dz
\simeq
\sum_{\ell=0}^{N}\omega_{\ell}k(\widehat{x}_{i},z_{\ell})\mathcal{H}(z_{\ell}),
\end{equation}
where $\mathcal{H}(z)=\displaystyle\frac{1}{\Gamma (\alpha)}f\bigg(z,\big(\mathcal{C}+\mathcal{G}+\widehat{\mathbf{W}}_{N}\big)\mathbf{\Phi}(z)\bigg)$.  Then, substituting  Eq. (\ref{908}) in Eq. (\ref{qq}), we obtain
\begin{equation}\label{up}
\mathbf{f}_{i}(\mathbf{W}_{N})=\mathbf{W}_{N}\mathbf{\Phi}(\widehat{x}_{i})-\sum_{\ell=0}^{N}\omega_{\ell}k(\widehat{x}_{i},z_{\ell})\mathcal{H}(z_{\ell})=0.
\end{equation}
Therefore,
\begin{align}\label{oppo}
 \mathds{F}_{N}(\mathbf{W}_{N})&=\big[ \mathbf{f}_{0}(\mathbf{W}_{N}),...,\mathbf{f}_{N}(\mathbf{W}_{N})\big]\equiv 0,
\end{align}
which gives a nonlinear algebraic system that can be solved by Newton's iterative method. The approximate solution for the Eq. (\ref{3}) is obtained in the following form
\begin{align}\label{56}
  y_{N}(x)&= \left(\mathcal{C}+\mathcal{G}+\mathbf{W}_{N} \mathcal{P}\right) \mathbf{\Phi}(x).
\end{align}
Newton's iterative method reads as follows:
\begin{align}\label{dcdc}
\left\{
  \begin{array}{ll}
   \mathds{J}(\mathbf{W}_{N,i})\delta_{N,i}=-\mathds{F}_{N}(\mathbf{W}_{N,i}); \\
    \mathbf{W}_{N,i+1}\leftarrow\mathbf{W}_{N,i}+\delta_{N,i},\\
    i\leftarrow i+1,
  \end{array}
\right.
\end{align}
with initial guess $\mathbf{W}_{N,0}$ and end condition $\Vert \mathds{F}_{N}(\mathbf{W}_{N,i})\Vert \leq \epsilon$,
where be a small enough number. The Jacobian matrix
$ \mathds{J}$ is defined as
\[
\mathds{J}_{i,j}=\frac{\partial \mathbf{f}_{i} }{\partial w_{j}}.
\]
By applying the iterative process (\ref{dcdc}), a sequence of approximate solution
\[
w_{N,i}(x)=\mathbf{W}_{N,i}\mathbf{\Phi}(x),\ i=0,1,2,...,
\]
is generated. It can be seen that for $\Vert w_{N,i}-w_{N}\Vert \rightarrow 0$, the Jacobian matrix should be nonsingular.
In the next section, we state some convergence results for Newton's method. To select a proper initial guess for Newton's methods, by using the initial condition $y_{N}(0)=c=\mathcal{C} \mathbf{\Phi}(0)$ and Eq. (\ref{56}), we choose the initial guess such that
\[
y_{N}(0)=\left(\mathcal{C}+\mathcal{G}+\mathbf{W}_{N,0} \mathcal{P}\right) \mathbf{\Phi}(0)=\mathcal{C} \mathbf{\Phi}(0).
\]
Since $\mathcal{G}=\widetilde{\mathbf{G}}\mathcal{P}$, we conclude that $\mathbf{W}_{N,0}=-\widetilde{\mathbf{G}}$.
\section{Convergence analysis}
In this section, we obtain an upper bound for the error vector of the fractional integration operational matrix and analyze the convergence of the method.
\begin{theorem}\label{pkl}(Generalized Taylor series \cite{Odibat})
 Let
\[
D_{*,0}^{i\nu}u(x)\in C(0,1], \ i=0,...,N+1,
\]
where $0<\theta< 1$. Then,
\begin{equation}\label{1231}
u(x)=\sum_{i=0}^{N}\frac{D_{*,0}^{i\nu}u(0)}{\Gamma(i\nu+1)}x^{i\nu}+\frac{x^{(N+1)\nu}}{\Gamma((N+1)\nu+1)}D_{*,0}^{(N+1)\nu}u(x)\vert_{x=\xi},
\end{equation}
with $0 < \xi \leq x$, $\forall x\in (0,1]$.
\end{theorem}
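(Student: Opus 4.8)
The plan is to prove the expansion by a telescoping argument built on the fundamental theorem of fractional calculus together with the semigroup property of the Riemann--Liouville integral $J_0^{\alpha}$. Writing $f_i(x):=D_{*,0}^{i\nu}u(x)$ for $i=0,\dots,N+1$, so that $f_0=u$ and $D_{*,0}^{\nu}f_i=f_{i+1}$, the hypothesis $f_i\in C(0,1]$ guarantees that each intermediate Caputo derivative is well defined and that the compositions below are legitimate. The target is to reconstruct $u=f_0$ from the increments between the consecutive quantities $J_0^{i\nu}f_i$.

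First I would establish a single-step identity. For $0<\nu<1$ and a function with continuous Caputo derivative, the fundamental theorem gives $J_0^{\nu}D_{*,0}^{\nu}f_i(x)=f_i(x)-f_i(0)$. Applying $J_0^{i\nu}$ to both sides and invoking the semigroup property $J_0^{i\nu}J_0^{\nu}=J_0^{(i+1)\nu}$ together with the elementary fact $J_0^{i\nu}1=x^{i\nu}/\Gamma(i\nu+1)$, one obtains
\[
J_0^{i\nu}f_i(x)-J_0^{(i+1)\nu}f_{i+1}(x)=\frac{D_{*,0}^{i\nu}u(0)}{\Gamma(i\nu+1)}\,x^{i\nu}.
\]
This is the crucial telescoping increment.

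Next I would sum this identity over $i=0,\dots,N$. The left-hand side collapses to $J_0^{0}f_0(x)-J_0^{(N+1)\nu}f_{N+1}(x)=u(x)-J_0^{(N+1)\nu}D_{*,0}^{(N+1)\nu}u(x)$, while the right-hand side produces the finite sum $\sum_{i=0}^{N}\frac{D_{*,0}^{i\nu}u(0)}{\Gamma(i\nu+1)}x^{i\nu}$. It then remains to recast the remainder. Writing it out as $\frac{1}{\Gamma((N+1)\nu)}\int_0^x(x-t)^{(N+1)\nu-1}D_{*,0}^{(N+1)\nu}u(t)\,dt$ and applying the weighted mean value theorem for integrals (the weight $(x-t)^{(N+1)\nu-1}$ being nonnegative and integrable), there is a point $\xi\in(0,x]$ for which the integral equals $D_{*,0}^{(N+1)\nu}u(\xi)\int_0^x(x-t)^{(N+1)\nu-1}dt$; evaluating the last integral as $x^{(N+1)\nu}/((N+1)\nu)$ and using $\Gamma((N+1)\nu+1)=(N+1)\nu\,\Gamma((N+1)\nu)$ yields exactly the stated remainder.

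The main obstacle I anticipate is justifying these algebraic manipulations under the weak regularity hypothesis $f_i\in C(0,1]$ rather than continuity on the closed interval. Two points need care: the cancellation $J_0^{\nu}D_{*,0}^{\nu}f_i=f_i-f_i(0)$ and the sequential composition $D_{*,0}^{\nu}D_{*,0}^{i\nu}=D_{*,0}^{(i+1)\nu}$ must hold for functions whose derivatives may blow up at the origin, which is precisely where the continuity of every $f_i$ on $(0,1]$ is invoked; and the weighted mean value theorem must be applied to an integrand that is only continuous on the half-open interval $(0,x]$, which is why the conclusion places $\xi$ in $(0,x]$ and should be obtained rigorously by first applying the theorem on $[\varepsilon,x]$ and then letting $\varepsilon\to0^{+}$, exploiting the integrability of the singular kernel.
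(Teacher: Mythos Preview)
The paper does not supply its own proof of this theorem; it is stated with a citation to \cite{Odibat} and used as a black box in the subsequent approximation estimate. Your telescoping argument via $J_0^{i\nu}f_i-J_0^{(i+1)\nu}f_{i+1}$ followed by the weighted mean value theorem is exactly the proof given in the Odibat--Shawagfeh reference, so your proposal reproduces the original source's argument faithfully.

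One clarification worth making explicit: in \cite{Odibat} the symbol $D_{*,0}^{i\nu}$ is \emph{defined} as the $i$-fold composition $D_{*,0}^{\nu}\cdots D_{*,0}^{\nu}$, so the identity $D_{*,0}^{\nu}f_i=f_{i+1}$ that you flag as a potential obstacle is true by definition there and needs no separate justification. If instead one reads $D_{*,0}^{i\nu}$ as a single Caputo derivative of order $i\nu$ (as the present paper's Definition~2 might suggest), then your concern is legitimate, since the two operators do not coincide in general; but that is an ambiguity in the statement, not a defect in your proof.
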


\begin{theorem}\label{098}
Let $D_{*,0}^{i\nu}u(x)\in C(0,1], \ i=0,...,N+1$, $0<\nu< 1$ and $\widehat{u}_{N}(x)=\sum_{n=0}^{N}a_{n}\widetilde{C}_{N,n,\nu}(x)$  be the best approximation to $u(x)$ out of $M_{N}$. Then,
\begin{equation}\label{bon}
\Vert u-\widehat{u}_{N}\Vert_{2} \leq \frac{\mathcal{N}_{\nu}}{\Gamma((N+1)\nu+1)\sqrt{(2N+3)\nu}},
\end{equation}
in which $\mathcal{N}_{\nu}:=\underset{x\in [0,1]}{\max} \vert D_{*,0}^{(N+1)\nu}u(x) \vert $.
\end{theorem}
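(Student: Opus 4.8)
The plan is to exploit the defining optimality of the best approximation together with the Generalized Taylor expansion of Theorem~\ref{pkl}. Since $\widehat{u}_{N}$ minimizes $\Vert u-v\Vert_{2}$ over all $v\in M_{N}$, it suffices to exhibit one convenient element of $M_{N}$ whose weighted distance to $u$ I can estimate directly; any upper bound for that distance is automatically an upper bound for $\Vert u-\widehat{u}_{N}\Vert_{2}$.

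First I would apply Theorem~\ref{pkl} to split
\[
u(x)=\underbrace{\sum_{i=0}^{N}\frac{D_{*,0}^{i\nu}u(0)}{\Gamma(i\nu+1)}x^{i\nu}}_{=:v(x)}+\frac{x^{(N+1)\nu}}{\Gamma((N+1)\nu+1)}D_{*,0}^{(N+1)\nu}u(\xi),
\]
with $0<\xi\le x$. The key structural observation is that $v$ is a fractional polynomial in $x^{\nu}$ of degree at most $N$, and hence lies in $M_{N}$: the $N+1$ fractional Chelyshkov polynomials $\widetilde{C}_{N,0,\nu},\dots,\widetilde{C}_{N,N,\nu}$ are linearly independent (being orthogonal by~\eqref{orto1}), and each is a combination of $\{x^{j\nu}\}_{j=0}^{N}$, so their span is exactly $\mathrm{Span}\{1,x^{\nu},\dots,x^{N\nu}\}\ni v$. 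Taking this $v$ as the comparison element and invoking the best-approximation inequality gives $\Vert u-\widehat{u}_{N}\Vert_{2}\le\Vert u-v\Vert_{2}$, where $u-v$ is precisely the Taylor remainder.

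It then remains to estimate the weighted $L^{2}$ norm of this remainder. I would bound the fractional-derivative factor uniformly by $\mathcal{N}_{\nu}=\max_{x\in[0,1]}\vert D_{*,0}^{(N+1)\nu}u(x)\vert$, pull the constant $\Gamma((N+1)\nu+1)^{-1}$ outside, and reduce the estimate to the elementary weighted moment
\[
\int_{0}^{1}x^{2(N+1)\nu}\,\varpi(x)\,dx=\int_{0}^{1}x^{(2N+3)\nu-1}\,dx=\frac{1}{(2N+3)\nu},
\]
using $\varpi(x)=x^{\nu-1}$. Substituting this value into $\Vert u-v\Vert_{2}^{2}$ and taking the square root yields exactly the claimed bound~\eqref{bon}.

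The computations here are entirely routine; the only step requiring care is the structural claim that the truncated generalized Taylor polynomial $v$ belongs to $M_{N}$. I expect this membership---which rests on the fractional Chelyshkov family spanning all fractional monomials $x^{j\nu}$, $0\le j\le N$---to be the conceptual crux, since everything else (the optimality inequality, the uniform bound on the Caputo derivative, and the Beta-type moment integral) then follows mechanically.
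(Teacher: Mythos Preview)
Your proposal is correct and follows essentially the same route as the paper: use the generalized Taylor expansion (Theorem~\ref{pkl}) to produce a fractional polynomial $v\in M_{N}$, invoke the best-approximation inequality $\Vert u-\widehat{u}_{N}\Vert_{2}\le\Vert u-v\Vert_{2}$, bound the Caputo-derivative factor by $\mathcal{N}_{\nu}$, and evaluate the weighted moment $\int_{0}^{1}x^{2(N+1)\nu}x^{\nu-1}\,dx=\frac{1}{(2N+3)\nu}$. If anything, your write-up is more careful than the paper's, since you explicitly justify the membership $v\in M_{N}$ via the orthogonality/linear-independence of the $\widetilde{C}_{N,n,\nu}$, a point the paper leaves implicit.
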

\begin{proof}
From Theorem \ref{pkl}, we have
\begin{align}
 \Vert u-\widehat{u}_{N} \Vert^{2} _{2} \leq \Vert u-\sum_{i=0}^{N}\frac{D_{*,0}^{i\nu}u(0)}{\Gamma(i\nu+1)}x^{i\nu}\Vert^{2} _{2}
 &\leq\int_{0}^{1}\bigg(  \frac{\mathcal{N}_{\nu}}{\Gamma((N+1)\nu+1)}x^{\nu-1} \bigg)^{2}dx\nonumber\\
  &\leq \bigg(\frac{\mathcal{N}_{\nu}}{\Gamma((N+1)\nu+1)}\bigg)^{2} \int_{0}^{1}x^{2(N+1)\nu}x^{\nu-1}dx\nonumber\\
   &\leq \bigg(\frac{\mathcal{N}_{\nu}}{\Gamma((N+1)\nu+1)}\bigg)^{2}\frac{1}{(2N+3)\nu}.
\end{align}
\end{proof}

\begin{corollary}\label{9877}
For the best approximate solution $\widehat{u}_{N}(x)=\sum_{n=0}^{N}a_{n}\widetilde{C}_{N,n,\nu}(x)$ to $u(x)$, from Theorem \ref{098} we have the following error bound
\begin{equation}\label{bon1}
\Vert u-\widehat{u}_{N}\Vert_{2}=O\left(\frac{1}{\Gamma((N+1)\nu+1)\sqrt{(2N+3)\nu}}\right).
\end{equation}
\end{corollary}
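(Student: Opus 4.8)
The plan is to obtain the stated asymptotic bound as an immediate consequence of the explicit estimate already established in Theorem \ref{098}. Under the standing hypotheses $D_{*,0}^{i\nu}u(x)\in C(0,1]$ for $i=0,\dots,N+1$, that theorem furnishes the quantitative inequality
\[
\Vert u-\widehat{u}_{N}\Vert_{2} \leq \frac{\mathcal{N}_{\nu}}{\Gamma((N+1)\nu+1)\sqrt{(2N+3)\nu}},
\]
so the whole task reduces to recasting the right-hand side in terms of the Landau symbol $O(\cdot)$. No new integration, orthogonality argument, or Taylor expansion is required beyond what Theorem \ref{098} already supplies.

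First I would observe that the numerator $\mathcal{N}_{\nu}=\max_{x\in[0,1]}\vert D_{*,0}^{(N+1)\nu}u(x)\vert$ is finite. Indeed, the hypothesis guarantees that $D_{*,0}^{(N+1)\nu}u$ is continuous on $(0,1]$, and assuming this fractional derivative extends boundedly to the origin, the maximum is attained and is a finite quantity. Treating $\mathcal{N}_{\nu}$ as a constant relative to the displayed factor, I would then absorb it into the implied multiplicative constant of the big-$O$ notation, which directly yields
\[
\Vert u-\widehat{u}_{N}\Vert_{2}=O\!\left(\frac{1}{\Gamma((N+1)\nu+1)\sqrt{(2N+3)\nu}}\right),
\]
as claimed.

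The only delicate point—hardly an obstacle in this setting—is the dependence of $\mathcal{N}_{\nu}$ on $N$ through the order $(N+1)\nu$ of the Caputo derivative. A fully rigorous reading therefore requires either the standing assumption that these high-order fractional derivatives remain uniformly bounded as $N$ grows, or the convention that $\mathcal{N}_{\nu}$ itself plays the role of the suppressed constant in the $O$-estimate. Under either interpretation the conclusion follows at once, so the corollary amounts to an asymptotic rephrasing of Theorem \ref{098} rather than an independent argument.
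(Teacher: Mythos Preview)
Your proposal is correct and matches the paper's own treatment: the corollary is stated without a separate proof and is presented as an immediate restatement of the bound in Theorem~\ref{098} using big-$O$ notation. Your observation about the hidden $N$-dependence of $\mathcal{N}_{\nu}$ is apt, but the paper does not address this subtlety either.
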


\begin{theorem}\label{0123}
\cite{Kreyszig}
Assume the hypothesis of Theorem \ref{098} is held. Then,
\[
\Vert u-\widehat{u}_{N} \Vert_{2}^{2}=\frac{\Psi(u,\widetilde{C}_{N,0,\nu},\widetilde{C}_{N,1,\nu},...,\widetilde{C}_{N,N,\nu})}{\Psi(\widetilde{C}_{N,0,\nu},\widetilde{C}_{N,1,\nu},...,\widetilde{C}_{N,N,\nu})},
\]
where
\[
\Psi(u,\phi_{1},\phi_{2},...,\phi_{N}):=\left|
  \begin{array}{cccc}
    \langle u,u\rangle & \langle u,\phi_{1}\rangle & \ldots & \langle u,\phi_{N}\rangle \\
   \langle \phi_{1},u\rangle & \langle \phi_{1},\phi_{1}\rangle & \ldots & \langle \phi_{1},\phi_{N}\rangle \\
    \vdots &  \vdots &\vdots & \vdots \\
  \langle \phi_{N},u\rangle & \langle \phi_{N},\phi_{1}\rangle & \ldots & \langle \phi_{N},\phi_{N}\rangle \\
  \end{array}
\right|.
\]
\end{theorem}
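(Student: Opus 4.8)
The plan is to recognize this as the classical Gram-determinant formula for the distance from a vector to a finite-dimensional subspace, and to derive it from the orthogonality characterization (\ref{bc}) of the best approximation together with elementary determinant manipulations. Write $e := u-\widehat{u}_{N}$ and recall from (\ref{bc}) that $\langle e,\widetilde{C}_{N,n,\nu}\rangle=0$ for $n=0,\dots,N$. Since the weighted inner product is real and symmetric, these relations give $\langle e,\widehat{u}_{N}\rangle=0$ and hence the self-reproducing identity $\langle u,e\rangle=\langle e,e\rangle=\Vert e\Vert_{2}^{2}$. The orthogonality to every basis function and this identity are the only analytic inputs; everything else is linear algebra.

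First I would write the numerator $\Psi(u,\widetilde{C}_{N,0,\nu},\dots,\widetilde{C}_{N,N,\nu})$ as the determinant of the $(N{+}2)\times(N{+}2)$ symmetric matrix whose $(0,0)$ entry is $\langle u,u\rangle$, whose remaining first row and first column carry the mixed products $\langle u,\widetilde{C}_{N,j,\nu}\rangle$, and whose lower-right $(N{+}1)\times(N{+}1)$ block is exactly the Gram matrix appearing in the denominator. Because $\widehat{u}_{N}=\sum_{n=0}^{N}a_{n}\widetilde{C}_{N,n,\nu}$, I would perform the elementary row operation that subtracts $\sum_{n=0}^{N}a_{n}$ times the $(n{+}1)$-st row from the first row; this leaves the determinant unchanged. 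Using $u-\sum_{n}a_{n}\widetilde{C}_{N,n,\nu}=e$ and the symmetry of $\langle\cdot,\cdot\rangle$, the new first row becomes $\big[\langle e,u\rangle,\langle e,\widetilde{C}_{N,0,\nu}\rangle,\dots,\langle e,\widetilde{C}_{N,N,\nu}\rangle\big]$, which by the two identities above collapses to $\big[\Vert e\Vert_{2}^{2},0,\dots,0\big]$.

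Expanding the resulting determinant along this sparse first row then yields $\Psi(u,\widetilde{C}_{N,0,\nu},\dots,\widetilde{C}_{N,N,\nu})=\Vert e\Vert_{2}^{2}\,\Psi(\widetilde{C}_{N,0,\nu},\dots,\widetilde{C}_{N,N,\nu})$, since the $(0,0)$-cofactor is precisely the denominator Gram determinant. Dividing gives the claimed formula for $\Vert u-\widehat{u}_{N}\Vert_{2}^{2}$.

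The one point that genuinely needs care, and which I view as the main obstacle, is the legitimacy of the division: I must argue that the denominator $\Psi(\widetilde{C}_{N,0,\nu},\dots,\widetilde{C}_{N,N,\nu})$ is nonzero. This follows because the fractional Chelyshkov polynomials are linearly independent in $L^{2}(\Omega)$; indeed they are mutually orthogonal by (\ref{orto1}), so their Gram matrix is diagonal with strictly positive entries $1/(\nu(2n+1))$ and the denominator equals $\prod_{n=0}^{N}\tfrac{1}{\nu(2n+1)}>0$. The orthogonality in fact simplifies the whole computation, since it makes the lower-right block diagonal; I would nonetheless present the row-reduction and cofactor argument for a general linearly independent system, so that the identity is seen to hold independently of the basis being orthogonal, with positivity of the denominator being the only structural property of the Chelyshkov system that is actually invoked.
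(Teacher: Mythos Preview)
Your argument is correct and is the standard derivation of the Gram-determinant distance formula: the row operation subtracting $\sum_{n}a_{n}$ times row $n{+}1$ from the first row, followed by cofactor expansion along the resulting first row $[\Vert e\Vert_2^2,0,\dots,0]$, yields exactly the stated identity, and your justification that the denominator is nonzero (indeed equal to $\prod_{n=0}^{N}\frac{1}{\nu(2n+1)}$ by (\ref{orto1})) is clean.

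As for comparison with the paper: there is nothing to compare. The paper does not supply a proof of Theorem~\ref{0123}; it simply quotes the result from Kreyszig's textbook and moves on. Your write-up therefore goes beyond what the paper does, and would be perfectly acceptable as a self-contained proof in place of the bare citation.
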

\begin{theorem}\label{KKK}Let
\begin{equation*}\label{E0}
\mathcal{E}(x)=[e_{0}(x),...,e_{N}(x)]:=\int _{0}^{x} (x-s)^{\alpha -1}\mathbf{\Phi}(s)ds- \mathcal{P} \mathbf{\Phi}(x)
\end{equation*}
 be the error vector related to $\mathcal{P}$. Then,
\begin{equation}
 \Vert e_{n}\Vert_{2} \rightarrow 0, \ \ \ \ \  N\rightarrow \infty,
\end{equation}
for $n=0,...,N$.
\end{theorem}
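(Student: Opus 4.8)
The plan is to recognise $e_{n}$ as a best-approximation error and then feed it into the spectral estimate of Theorem~\ref{098}. First I would use the exact identity (\ref{treee}) to write
\[
\int_{0}^{x}(x-s)^{\alpha-1}\widetilde{C}_{N,n,\nu}(s)\,ds=\sum_{j=n}^{N}\mu_{n,j}\,x^{j\nu+\alpha}=:h_{n}(x),
\]
where $\mu_{n,j}=(-1)^{j-n}\binom{N-n}{j-n}\binom{N+j+1}{N-n}B(\alpha,j\nu+1)$, so that $h_{n}=\Gamma(\alpha)J_{0}^{\alpha}\widetilde{C}_{N,n,\nu}$. Since by (\ref{bvb}) the numbers $\xi_{k,j}$ entering $\Theta(n,k)$ in (\ref{Dad}) are exactly the $L^{2}(\varpi)$-orthogonal projection coefficients of $x^{j\nu+\alpha}$ onto $M_{N}$, linearity of the orthogonal projection $P_{N}$ onto $M_{N}$ gives $\sum_{k=0}^{N}\Theta(n,k)\widetilde{C}_{N,k,\nu}=P_{N}h_{n}$. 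Hence $e_{n}=h_{n}-P_{N}h_{n}$ is precisely the best-approximation error of $h_{n}$ out of $M_{N}$.

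Next I would apply Theorem~\ref{098} (equivalently Corollary~\ref{9877}) to $u=h_{n}$. Because $h_{n}$ is a finite combination of fractional powers, its iterated Caputo derivatives $D_{*,0}^{i\nu}h_{n}$ are again fractional powers and lie in $C(0,1]$, so the hypotheses hold and
\[
\Vert e_{n}\Vert_{2}=\Vert h_{n}-P_{N}h_{n}\Vert_{2}\le\frac{\mathcal{N}_{\nu}}{\Gamma((N+1)\nu+1)\sqrt{(2N+3)\nu}},
\]
with $\mathcal{N}_{\nu}=\max_{x}\bigl|D_{*,0}^{(N+1)\nu}h_{n}(x)\bigr|$ as in Theorem~\ref{098}. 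Alternatively, one may split $e_{n}=\sum_{j=n}^{N}\mu_{n,j}\bigl(x^{j\nu+\alpha}-P_{N}x^{j\nu+\alpha}\bigr)$ and bound each summand by the triangle inequality, applying Theorem~\ref{098} to the single power $u=x^{j\nu+\alpha}$, for which
\[
D_{*,0}^{(N+1)\nu}x^{j\nu+\alpha}=\frac{\Gamma(j\nu+\alpha+1)}{\Gamma((j-N-1)\nu+\alpha+1)}\,x^{(j-N-1)\nu+\alpha}.
\]

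The convergence then rests on the factor $1/\Gamma((N+1)\nu+1)$, which by Stirling's formula decays faster than any exponential in $N$. The remaining task is to verify that the numerator grows at most geometrically: the coefficients $\mu_{n,j}$ contain binomials of size $O(4^{N})$, while the Beta values $B(\alpha,j\nu+1)$ are bounded and the derivative ratios $\Gamma(j\nu+\alpha+1)/\Gamma((j-N-1)\nu+\alpha+1)$ grow only geometrically. Since the reciprocal gamma dominates, the quotient tends to $0$ and hence $\Vert e_{n}\Vert_{2}\to0$ for each fixed $n$ as $N\to\infty$.

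The hard part will be this numerator estimate, for two reasons. First, when $\alpha<\nu$ the powers $x^{(j-N-1)\nu+\alpha}$ carry negative exponents, so $\mathcal{N}_{\nu}$ taken as a maximum over the closed interval is infinite; the honest route is to retain the mean-value point $\xi$ in the generalized Taylor remainder of Theorem~\ref{pkl} and integrate the genuine remainder against the weight $\varpi$, as in the proof of Theorem~\ref{098}, checking that the weighted $L^{2}$ integral stays finite despite the endpoint singularity. Second, one must make the competition between the super-exponential gamma decay and the $O(4^{N})$ binomial growth quantitative through Stirling's formula, uniformly in $j\in\{n,\dots,N\}$. This balancing of gamma against binomial growth is the technical heart of the argument; the rest is bookkeeping.
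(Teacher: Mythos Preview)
Your decomposition is exactly the paper's: write $e_{n}(x)=\sum_{j=n}^{N}\mu_{n,j}\bigl(x^{j\nu+\alpha}-P_{N}x^{j\nu+\alpha}\bigr)$ via (\ref{treee})--(\ref{Za}), apply the triangle inequality, and then invoke the best-approximation decay of Corollary~\ref{9877} on each summand. The only cosmetic difference is that the paper routes the per-term bound through the Gram-determinant identity of Theorem~\ref{0123} before citing Corollary~\ref{9877}, whereas you go straight to Theorem~\ref{098}; these are equivalent, since $\sum_{k}\xi_{k,j}\widetilde{C}_{N,k,\nu}$ is by construction the orthogonal projection of $x^{j\nu+\alpha}$.

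Where you part company with the paper is in rigor, not strategy. The paper's proof simply asserts ``from (\ref{kgg}) and Corollary~\ref{9877}, we can conclude that $\Vert e_{n}\Vert_{2}\to 0$'' and stops; it never confronts the $O(4^{N})$ growth of the binomial prefactors $\mu_{n,j}$ against the $1/\Gamma((N+1)\nu+1)$ decay, nor the endpoint singularity of $D_{*,0}^{(N+1)\nu}x^{j\nu+\alpha}$ when $\alpha<\nu$. You have correctly isolated both of these as the genuine technical content. In other words, your proposal already contains everything the paper's proof contains, and your ``hard part'' paragraph names precisely the steps the paper omits. If you carry out the Stirling balancing and the weighted-$L^{2}$ treatment of the Taylor remainder as you outline, you will have a proof that is strictly more complete than the published one.
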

\begin{proof}
From (\ref{treee}) and (\ref{Za}), we have
\begin{align}\label{xvv}
e_{n}(x)=\sum_{j=n}^{N}(-1)^{j-n}\binom{N-n}{j-n}\binom{N+j+1}{N-n} B(\alpha , j\nu +1)\left( x^{j\nu+\alpha}- \sum_{k=0}^{N}\xi_{k,j}\widetilde{C}_{N,k}(x)\right),
\end{align}
for $n=0,1,...,N$. On the other hand, from Theorem \ref{0123}, we can get
\begin{equation}\label{kgg}
\Vert x^{j\nu+\alpha}- \sum_{k=0}^{N}\xi_{k,j}\widetilde{C}_{N,k}(x) \Vert_{2}^{2}\leq
\frac{\Psi(x^{j\nu+\alpha},\widetilde{C}_{N,0,\nu},\widetilde{C}_{N,1,\nu},...,\widetilde{C}_{N,N,\nu})}{\Psi(\widetilde{C}_{N,0,\nu},\widetilde{C}_{N,1,\nu},...,\widetilde{C}_{N,N,\nu})},
\end{equation}
therefore,
\begin{align}
\Vert e_{n}\Vert_{2}&\leq \sum_{j=n}^{N}\binom{N-n}{j-n}\binom{N+j+1}{N-n} B(\alpha , j\nu +1)\left(\frac{\Psi(x^{j\nu+\alpha},\widetilde{C}_{N,0,\nu},\widetilde{C}_{N,1,\nu},...,\widetilde{C}_{N,N,\nu})}{\Psi(\widetilde{C}_{N,0,\nu},\widetilde{C}_{N,1,\nu},...,\widetilde{C}_{N,N,\nu})}\right)^{1/2},
\end{align}
that gives an upper bound for each component of the error vector. Finally, from  (\ref{kgg}) and Corollary \ref{9877}, we can conclude that
\begin{align*}
\Vert e_{n}\Vert_{2}&\rightarrow 0,\ \ \ \ N\rightarrow \infty,
\end{align*}
that complete the proof.
\end{proof}

For example, let $N=4$ and $\alpha=\nu=\frac{1}{2}$, we have
\begin{align*}
&\Vert e_{0}\Vert_{2}\leq 1.5666\times 10^{-1},\ \ \Vert e_{1}\Vert_{2}\leq 9.3999\times 10^{-2},\\\ \
&\Vert e_{2}\Vert_{2}\leq 3.1333\times 10^{-2},\ \ \ \Vert e_{3}\Vert_{2}\leq 4.4761\times 10^{-3}.\ \
\end{align*}

\begin{theorem}\label{B} Assume that $y(x)$ and $y_{N}(x)$ are the exact solution and approximate solution of \eqref{two}, respectively. Then,
 \begin{equation}\label{bbm}
\Vert y-y_{N}\Vert_{2}\rightarrow 0,\ \ \ N\rightarrow \infty.
\end{equation}
\end{theorem}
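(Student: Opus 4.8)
The plan is to treat the collocation scheme as a perturbed fixed-point problem and to reuse the contraction that already yields existence and uniqueness. First I would reduce the claim to an estimate for $w-w_N$. Since the exact solution satisfies \eqref{3} with $w=\widetilde Ky$, while \eqref{56} is the computed approximation, I introduce the \emph{exact} reconstruction $\widetilde{y}_N(x):=\widetilde g(x)+\int_0^x(x-t)^{\alpha-1}w_N(t)\,dt$ and split
\[
y-y_N=(y-\widetilde{y}_N)+(\widetilde{y}_N-y_N),\qquad y(x)-\widetilde{y}_N(x)=\int_0^x(x-t)^{\alpha-1}\bigl(w(t)-w_N(t)\bigr)\,dt .
\]
Because the Riemann--Liouville integral $J_0^\alpha$ is bounded on $L^2(\Omega)$, we obtain $\Vert y-\widetilde{y}_N\Vert_2\le c_\alpha\Vert w-w_N\Vert_2$, so everything rests on $\Vert w-w_N\Vert_2$ and on the discretization gap $\Vert\widetilde{y}_N-y_N\Vert_2$.

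The second gap is a pure operational-matrix/approximation error. By Theorem \ref{popp}, passing from $\widetilde{y}_N$ to the representation \eqref{56} replaces $\widetilde g$ by $(\mathcal{C}+\mathcal{G})\mathbf{\Phi}$ and the fractional integral of $w_N$ by $\mathbf{W}_N\mathcal{P}\mathbf{\Phi}$, so
\[
\widetilde{y}_N-y_N=\bigl(\widetilde g-(\mathcal{C}+\mathcal{G})\mathbf{\Phi}\bigr)+\mathbf{W}_N\,\mathcal{E},
\]
where $\mathcal{E}$ is the operational-matrix error vector of Theorem \ref{KKK}. The first term is the best-approximation defect of $\widetilde g\in C^{m,1-\alpha}(\Omega)$ and tends to $0$ by Corollary \ref{9877}; the second tends to $0$ by Theorem \ref{KKK}, provided the coefficient vectors $\mathbf{W}_N$ stay bounded. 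Hence $\Vert\widetilde{y}_N-y_N\Vert_2\to0$.

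For the central estimate I would let $\mathcal{T}$ denote the integral operator on the right-hand side of \eqref{3vbbb}, whose unique fixed point is $w$; the hypothesis \eqref{6} together with the Lipschitz bound \eqref{800} makes $\mathcal{T}$ a contraction with modulus $\gamma:=L_f\mathcal{M}_k/\Gamma(\alpha+1)<1$, exactly as in the existence argument for \eqref{HGH}. The computed $w_N$ is the fixed point of the perturbed operator $\mathcal{T}_N:=\mathcal{I}_N\,\mathcal{Q}_N\,\mathcal{T}$, where $\mathcal{Q}_N$ encodes the Gauss--Legendre rule \eqref{908} for the outer integral and the operational matrix for the inner one, and $\mathcal{I}_N$ is the interpolation operator \eqref{daroon}. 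Writing $w-w_N=(\mathcal{T}w-\mathcal{T}w_N)+(\mathcal{T}w_N-\mathcal{T}_Nw_N)$ and using the contraction on the first bracket gives
\[
\Vert w-w_N\Vert_2\le\gamma\,\Vert w-w_N\Vert_2+\Vert(\mathcal{T}-\mathcal{T}_N)w_N\Vert_2,\qquad\text{so}\qquad \Vert w-w_N\Vert_2\le\frac{\Vert(\mathcal{T}-\mathcal{T}_N)w_N\Vert_2}{1-\gamma}.
\]
The consistency term decomposes into the interpolation defect $(\mathrm{id}-\mathcal{I}_N)\mathcal{T}w_N$, controlled by Corollary \ref{9877}, the operational-matrix defect of the inner fractional integral, controlled by Theorem \ref{KKK}, and the Gauss--Legendre quadrature error of \eqref{908}; each vanishes as $N\to\infty$. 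Combining the three bounds yields $\Vert y-y_N\Vert_2\to0$.

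I expect the genuine obstacle to lie in this central step: one must verify that the \emph{discrete} map $\mathcal{T}_N$ remains a contraction with modulus bounded below $1$ uniformly in $N$, i.e. that inserting $\mathcal{I}_N$, $\mathcal{Q}_N$ and the operational matrix does not destroy the contraction or let the interpolation Lebesgue constants blow up, and that $\Vert(\mathcal{T}-\mathcal{T}_N)w_N\Vert_2$ is uniformly bounded and convergent. Within this, the quadrature contribution is the most delicate, since its decay needs smoothness of $z\mapsto k(\widehat x_i,z)f(z,\cdot)$ beyond what \eqref{800} alone provides; I would either strengthen the hypotheses on $k$ and $f$ so that this error is negligible relative to the spectral terms, or absorb it into $\mathcal{N}_\nu$-type constants. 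The remaining steps are routine invocations of Corollary \ref{9877}, Theorem \ref{KKK}, and the $L^2$-boundedness of $J_0^\alpha$.
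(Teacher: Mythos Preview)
Your proposal is substantially more elaborate than what the paper actually does, and it follows a genuinely different route.

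The paper's argument is very short: it subtracts \eqref{56} from \eqref{3vbbbv} to get
\[
y(x)-y_N(x)=\bigl(\widetilde g(x)-(\mathcal{C}+\mathcal{G})\mathbf{\Phi}(x)\bigr)+\int_0^x(x-t)^{\alpha-1}\bigl(w(t)-w_N(t)\bigr)\,dt,
\]
bounds $\vert w(x)-w_N(x)\vert$ directly via the Lipschitz condition \eqref{800} to obtain an inequality of the form \eqref{4rf}, and then simply invokes Theorem \ref{098} and Theorem \ref{KKK} to conclude. There is no intermediate function $\widetilde y_N$, no perturbed operator $\mathcal{T}_N$, no absorption argument with the contraction modulus $\gamma$, and no separate treatment of the interpolation or Gauss--Legendre quadrature errors; those discretization steps are silently ignored.

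What your approach buys is rigor: you correctly recognize that the right-hand side of the paper's bound \eqref{4rf} still contains the term $\int_0^z(z-t)^{\alpha-1}w(t)\,dt-\widehat{\mathbf W}_N\mathbf{\Phi}(z)$, which implicitly depends on $w-w_N$ through $\widehat{\mathbf W}_N=\mathbf W_N\mathcal P$, so an absorption step (your $1/(1-\gamma)$ factor) is really needed. You also isolate the collocation and quadrature contributions, which the paper omits. Conversely, the paper's approach has the virtue of brevity and avoids the side issues you flag (uniform contractivity of $\mathcal{T}_N$, Lebesgue constants, quadrature smoothness), though it does so by not confronting them. Your outline is sound; the ``genuine obstacle'' you identify is real and is precisely what the paper's proof sweeps under the rug.
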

\begin{proof}
By subtracting \eqref{56} from \eqref{3vbbbv}, we have
\begin{align}
  y(x)-y_{N}(x)=\widetilde{g}(x)-\big(\mathcal{C}+\mathcal{G}\big)\mathbf{\Phi}(x)+\int_{0}^{x}(x-t)^{\alpha-1}\left(w(t)-w_{N}(t)\right)dt,
\end{align}
 that yields
\begin{equation}
\Vert y-y_{N}\Vert_{2} \leq \Vert \widetilde{g}(x)-\big(\mathcal{C}+\mathcal{G}\big)\mathbf{\Phi}(x)\Vert_{2} +\Vert \int_{0}^{x}(x-t)^{\alpha-1}\big(w(t)-w_{N}(t)\big)dt\Vert_{2}.
\end{equation}
On the other hand,
\begin{align}
w(x)-w_{N}(x)&=\frac{1}{\Gamma
(\alpha)}\int_{0}^{1}k(x,z)f\bigg(z,\widetilde{g}(z)+\displaystyle\int_{0}^{z}(z-t)^{\alpha-1}w(t)dt\bigg)dz\nonumber\\
&-\frac{1}{\Gamma (\alpha)}\int_{0}^{1}k(x,z)f\bigg(z,\big(\mathcal{C}+\mathcal{G}+\widehat{\mathbf{W}}_{N}\big)\mathbf{\Phi}(z)\bigg)dz,
\end{align}
and from (\ref{800}), we can write
\begin{align}\label{4rf}
\vert w(x)-w_{N}(x)\vert &\leq \frac{L_{f}\mathcal{M}_{k}}{\Gamma
(\alpha)}\bigg(\int_{0}^{1}\vert \widetilde{g}(z)-\big(\mathcal{C}+\mathcal{G}\big)\mathbf{\Phi}(z)\vert dz\nonumber\\
&+\int_{0}^{1}\vert \displaystyle\int_{0}^{z}(z-t)^{\alpha-1}w(t)dt-\widehat{\mathbf{W}}_{N}\mathbf{\Phi}(z)\vert dz\bigg).
\end{align}
So, by using Theorems \ref{098} and \ref{KKK} in (\ref{4rf}), we can obtain the desired result (\ref{bbm}).
\end{proof}

Now, we discuss the conditions under which Newton's method is convergent. To this end, we consider the operator form of Eq. (\ref{oppo}) as follows
\begin{equation}\label{ppqq}
\mathcal{F}_{N}(w_{N})=w_{N}-\mathcal{I}_{N}\mathcal{K}_{N}w_{N}\equiv 0,
\end{equation}
where $\mathcal{K}_{N}$ is an approximate quadrature of integral operator $\mathcal{K}$ defined as
\[
\mathcal{K}w(x)=\frac{1}{\Gamma(\alpha)}\int_{0}^{1}k(x,z)f\bigg(z,\widetilde{g}(z)+\displaystyle\int_{0}^{z}(z-t)^{\alpha-1}w(t)dt\bigg)dz.
\]
The Frechet derivative of $\mathcal{F}_{N}$ at $w_{N}$ is defined as
\[
\mathcal{F}'_{N}(w_{N})(v)=v-\mathcal{I}_{N}\mathcal{K}'_{N}(w_{N})(v),
\]
in which
\[
\mathcal{K}'_{N}(w)(v)=\displaystyle\frac{1}{\Gamma (\alpha)}\sum_{\ell=0}^{N}\omega_{\ell}k(x,z_{\ell})f'\bigg(z,\big(\mathcal{C}+\mathcal{G}+\widehat{\mathbf{W}}_{N}\big)\mathbf{\Phi}(z_{\ell})\bigg)v(z_{\ell}),
\]
and $ f':=f_{y}(x,y)\in C(\Omega)$. From Lemma 2.2 in \cite{Anselone}, we can conclude that if
\[
\Vert \mathcal{I}_{N}\mathcal{K}'_{N}w_{N}-\mathcal{K}'w\Vert \rightarrow 0 ,\ \ \ N \rightarrow \infty,
\]
and $\mathcal{K}'w$ has no eigenvalue equal to 1, then $[I-\mathcal{I}_{N}\mathcal{K}'_{N}w_{N}]$ is invertible. To this end,
assume the following conditions hold:
\begin{itemize}
\item[(R1)]
$
\vert f_{y}(x,y)-f_{y}(x',y)\vert \leq C_{1} \vert x-x'\vert^{\beta},
$
\item[(R2)]
$
\vert f_{y}(x,u)-f_{y}(x,v)\vert \leq C_{2} \vert u-v\vert,
$
\end{itemize}
where $C_{i}$ are positive constants. According to triangular inequality, we get
\begin{align}
\Vert \mathcal{I}_{N}\mathcal{K}'_{N}w_{N}-\mathcal{K}'w\Vert_{2} &\leq  \Vert \mathcal{I}_{N}\mathcal{K}'w-\mathcal{K}'w\Vert_{2}+\Vert \mathcal{I}_{N}\mathcal{K}'_{N}w_{N}-\mathcal{I}_{N}\mathcal{K}'_{N}w\Vert_{2} \nonumber\\
&+\Vert \mathcal{I}_{N}\mathcal{K}'_{N}w-\mathcal{I}_{N}\mathcal{K}'w\Vert_{2}.
\end{align}
From Corollary (\ref{9877}) and that $\mathcal{I}_{N}\mathcal{K}'w \in M_{N}$ is the best approximation to $\mathcal{K}'w$, under condition (R1), we conclude that
\[
\Vert \mathcal{I}_{N}\mathcal{K}'w-\mathcal{K}'w\Vert_{2}  \rightarrow 0,\ \ \ N \rightarrow \infty.
\]
Using condition (R2) and (\ref{4rf}), we have
\[
\Vert \mathcal{I}_{N}\mathcal{K}'_{N}w_{N}-\mathcal{I}_{N}\mathcal{K}'_{N}w\Vert_{2}\leq \Vert \mathcal{I}_{N}\Vert_{2} \Vert w_{N}-w\Vert_{2}
\rightarrow 0,
\]
as $N \rightarrow \infty$. From Theorem 1 in \cite{Nevai} and integration error estimation from Gauss-quadrature role (\cite{Canuto}, p. 290), one can show that
\[
\Vert \mathcal{I}_{N}\mathcal{K}'_{N}w-\mathcal{I}_{N}\mathcal{K}'w\Vert_{2} \rightarrow 0,\ \ \ N \rightarrow \infty.
\]
In the following theorem, we deal with the local convergence of Newton's method:
\begin{theorem}
Assume that $w_{N}$ is the solution of Eq. (\ref{ppqq}) and $[I-\mathcal{K}'w]^{-1}$ exists. Assume further that the conditions (R1)-(R2) be held. Then, there exist a $\epsilon>0$ such that if $\Vert w_{N,0}-w_{N}\Vert \leq \epsilon$, then Newton's method converges. Furthermore,
\[
\Vert w_{N,i}-w_{N}\Vert \leq  \frac{(r\epsilon)^{2^{i}}}{r},
\]
provided that $r\epsilon<1$ for some constant $r$.
\end{theorem}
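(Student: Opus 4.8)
The plan is to treat this as a standard Newton--Kantorovich local convergence argument carried out for the discrete operator $\mathcal{F}_{N}$ introduced in (\ref{ppqq}). Writing the iteration (\ref{dcdc}) in the equivalent form
$$w_{N,i+1}=w_{N,i}-\big[\mathcal{F}_{N}'(w_{N,i})\big]^{-1}\mathcal{F}_{N}(w_{N,i}),$$
the first task is to guarantee that the inverse appearing here exists and is bounded. Here I would lean on the facts already assembled before the statement: $[I-\mathcal{K}'w]^{-1}$ exists by hypothesis, the consistency estimate $\Vert \mathcal{I}_{N}\mathcal{K}_{N}'w_{N}-\mathcal{K}'w\Vert\to 0$ was established, and consequently (via Lemma 2.2 in \cite{Anselone}) $\mathcal{F}_{N}'(w_{N})=I-\mathcal{I}_{N}\mathcal{K}_{N}'w_{N}$ is invertible for $N$ large as a small perturbation of $I-\mathcal{K}'w$. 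This yields a bound $\Vert[\mathcal{F}_{N}'(w_{N})]^{-1}\Vert\le\beta$ with $\beta$ independent of $N$.

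The second ingredient is Lipschitz continuity of the Frechet derivative near $w_{N}$. Since $\mathcal{F}_{N}'(w)(v)=v-\mathcal{I}_{N}\mathcal{K}_{N}'(w)(v)$ and $\mathcal{K}_{N}'$ depends on $w$ only through the argument of $f_{y}$, condition (R2), together with the uniform boundedness of the quadrature weights and of $\mathcal{I}_{N}$ on $M_{N}$, gives
$$\Vert \mathcal{F}_{N}'(u)-\mathcal{F}_{N}'(v)\Vert\le \gamma\,\Vert u-v\Vert$$
for $u,v$ in a fixed ball about $w_{N}$, with $\gamma$ again independent of $N$. Combining this with the bounded invertibility of $\mathcal{F}_{N}'(w_{N})$ and shrinking the ball if necessary, a Banach perturbation argument shows that $[\mathcal{F}_{N}'(w_{N,i})]^{-1}$ exists and stays uniformly bounded, say by $2\beta$, while the iterates remain in that ball.

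The heart of the argument is then the quadratic estimate. Using $\mathcal{F}_{N}(w_{N})=0$ and the integral form of the mean value theorem,
$$w_{N,i+1}-w_{N}=\big[\mathcal{F}_{N}'(w_{N,i})\big]^{-1}\int_{0}^{1}\Big[\mathcal{F}_{N}'(w_{N,i})-\mathcal{F}_{N}'\big(w_{N}+\theta(w_{N,i}-w_{N})\big)\Big](w_{N,i}-w_{N})\,d\theta,$$
so the Lipschitz bound and $\int_{0}^{1}(1-\theta)\,d\theta=\tfrac12$ give $\Vert w_{N,i+1}-w_{N}\Vert\le r\,\Vert w_{N,i}-w_{N}\Vert^{2}$ with $r$ absorbing $\beta$, $\gamma$ and the factor $\tfrac12$. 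Setting $e_{i}=\Vert w_{N,i}-w_{N}\Vert$ and $u_{i}=r e_{i}$, the recurrence $u_{i+1}\le u_{i}^{2}$ iterates to $u_{i}\le u_{0}^{2^{i}}\le(r\epsilon)^{2^{i}}$, i.e. $\Vert w_{N,i}-w_{N}\Vert\le(r\epsilon)^{2^{i}}/r$, which is the claimed bound; the condition $r\epsilon<1$ forces decay to $0$ and keeps every iterate inside the ball, so the neighborhood hypotheses are self-sustaining.

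I expect the main obstacle to be the first two steps: extracting \emph{uniform}-in-$N$ constants $\beta$ and $\gamma$ and verifying that $\mathcal{F}_{N}'$ is boundedly invertible throughout a fixed neighborhood rather than merely at $w_{N}$. This hinges on controlling $\Vert\mathcal{I}_{N}\Vert$ on $M_{N}$ and on the quadrature error estimates already cited (Theorem 1 in \cite{Nevai} and the Gauss-quadrature bound in \cite{Canuto}); once these are in hand, the quadratic recurrence and its solution are routine.
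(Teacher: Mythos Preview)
Your proposal is correct and follows the same Newton--Kantorovich route that the paper relies on; the only difference is that the paper compresses everything into a one-line appeal to Theorem~5.4.1 of \cite{Atkinson} together with the preceding discussion establishing $\Vert \mathcal{I}_{N}\mathcal{K}_{N}'w_{N}-\mathcal{K}'w\Vert\to 0$, whereas you have written out in full the bounded-inverse, Lipschitz-derivative, and quadratic-recurrence steps that this citation encapsulates. In particular, your concerns about uniform-in-$N$ constants and invertibility throughout a neighborhood are exactly what the cited theorem handles, so there is no gap.
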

\begin{proof}
If 1 is not the eigenvalue of $\mathcal{K}'w$, then $[I-\mathcal{K}'w]$ is  invertible. The proof is straightforward from Theorem 5.4.1 in \cite{Atkinson} and the above discussion.
\end{proof}
\section{Numerical examples}
In this section, we intend to show the accuracy of the proposed method to solve the problem (\ref{two}) with smooth and non-smooth solutions. All calculations are computed by Maple 2018 with Digits=40. The $L_{2}$-error is measured in the following way
\[
\Vert E_{N}\Vert_{2}=\bigg(\sum_{l=0}^{N}\vert E_{N}(x_{l}) \vert^{2} \bigg)^{1/2} \approx
\sqrt{\frac{\sum_{l=0}^{N}\vert _{N}(x_{l}) \vert^{2}}{N}},
\]
where $E_{N}(x)=\vert y(x)-y_{N}(x) \vert$, denotes the absolute difference between exact and approximate solution.
In these examples, $m$ denotes the number of Newton's iterations with initial value $W_{N,0}=-\widetilde{\mathbf{G}}$.

 The steps of the numerical method can be summarized as follows:
\\
\textbf{Input:}
Input $N$, $\alpha$, $\nu$, $k$, $f$ and $g$.
\smallskip \\
\textbf{Output:} The approximate solution $y_{N}(x)=\widetilde{g}(x)+\mathbf{W} \mathcal{P} \mathbf{\Phi}(x)$.
\smallskip \\
\textbf{Step 1.} Construct the vector basis $\mathbf{\Phi}(t)$ in (\ref{wd}) from (\ref{ghgh}).
\smallskip  \\
\textbf{Step 2.} Compute the vectors $\mathcal{C},\mathcal{G},\widehat{\mathbf{W}}$ from Theorem \ref{popp}.
\smallskip  \\
\textbf{Step 3.} Construct the nonlinear algebraic system (\ref{oppo}) using
collocation points $\widehat{x}_{i},\ i=0,...,N$ and quadrature
formulae with weights and nodes
$(z_{\ell},\omega_{\ell})_{\ell=0}^{N}$.
\smallskip  \\
\textbf{Step 4.} Solve the system (\ref{oppo}) using Newton's iterative method.

\begin{example}\label{DWr}
Consider the problem
\[
\left\{
  \begin{array}{ll}
 D_{*0}^{\frac{1}{2}}y(x)=\displaystyle \frac{\sqrt{\pi}}{2}-\frac{1}{4}+\frac{1}{2}\displaystyle\int_{0}^{1}y^{2}(t)dt,\\
    y(0)=0.
  \end{array}
\right.
\]
\end{example}
with the non-smooth solution $y(x)=\sqrt{x}$. Using relations (\ref{3})-(\ref{kok}), we obtain the equivalent nonlinear integral equation
\begin{align}
 y(x)=\widetilde{g}(x)+ \int_{0}^{x}(x-t)^{\frac{-1}{2}}\widetilde{K}(y(t))dt,
\end{align}
where,
\begin{equation}
\widetilde{K}y(x)=\frac{1}{\sqrt{\pi}}\int_{0}^{1}y^{2}(z)dz,\ \ \
\widetilde{g}(x)=\displaystyle\frac{\sqrt{x}(\sqrt{\pi}-\frac{1}{2})}{\sqrt{\pi}}.
\end{equation}
From (\ref{3vbbb}), we have
\begin{align}
w(x)=\frac{1}{\sqrt{\pi}}\int_{0}^{1}\bigg(\displaystyle\frac{\sqrt{z}(\sqrt{\pi}-\frac{1}{2})}{\sqrt{\pi}}+\displaystyle\int_{0}^{z}(z-t)^{-\frac{1}{2}}w(t)dt\bigg)dz.
\end{align}
Now, we apply our method with $N=1$. From \textbf{Step 1.}, let
\begin{align}
&w_{1}(x)=w_{0}\widetilde{C}_{1,0}(x)+w_{1}\widetilde{C}_{1,1}(x)=\mathbf{W}_{1}\mathbf{\Phi}(x),\nonumber\\
&\mathbf{W}_{1}=[w_{0},w_{1}],\ \ \ \ \mathbf{\Phi}(x)=[2-3\sqrt{x},\sqrt{x}]^{T}.
\end{align}
From \textbf{Step
2.},
\[
\displaystyle \mathcal{P}= \left[
  \begin{array}{cc}
   \displaystyle \frac{\pi}{8}&\displaystyle 4-\frac{9\pi}{8} \\
   \displaystyle \frac{-\pi}{24}&\displaystyle \frac{3\pi}{8}\\
  \end{array}
\right],\ \ \
\widehat{\mathbf{W}}_{1}=\mathbf{W}_{1}
\mathcal{P}=[\frac{\pi}{8}w_{0}-\frac{\pi}{24}w_{1},(4-\frac{9\pi}{8})w_{0}+\frac{3\pi}{8}w_{1}],
\]
\[
\mathcal{C}=[0,0],\ \ \
\widetilde{\mathbf{G}}=[\frac{2\sqrt{\pi}-1}{8\sqrt{\pi}},\frac{6\sqrt{\pi}-3}{8\sqrt{\pi}}],\
\ \
\mathcal{G}=\widetilde{\mathbf{G}}\mathcal{P}=[0,1-\frac{1}{2\sqrt{\pi}}],
\]
\[
\mathcal{H}(z)=\frac{1}{\sqrt{\pi}}\bigg(\big(4\sqrt{x}-\frac{3\pi\sqrt{x}}{2}+\frac{\pi}{4}\big)w_{1}+\big(\frac{\pi\sqrt{x}}{2}-\frac{\pi}{12}\big)w_{2}+\sqrt{x}-\frac{\sqrt{x}}{2\sqrt{\pi}} \bigg)^{2}.
\]
From \textbf{Step 3.}, we obtain
we obtain
\begin{equation}
\mathbf{f}_{i}(\mathbf{W}_{1})=\mathbf{W}_{1}\mathbf{\Phi}(\widehat{x}_{i})-\int_{0}^{1}\mathcal{H}(z)dz=0,
\end{equation}
consequently,
\begin{align}
 \mathds{F}_{1}(\mathbf{W}_{1})&=\big[ \mathbf{f}_{0}(\mathbf{W}_{1}),\mathbf{f}_{1}(\mathbf{W}_{1})\big]\equiv 0,
\end{align}
with the collocation points $\widehat{x}_{0}=\frac{3}{5}+\frac{\sqrt{6}}{10},
\widehat{x}_{1}=\frac{3}{5}-\frac{\sqrt{6}}{10}$. From
\textbf{Step 4.}, Newton's iterative method with initial guess $\mathbf{W}_{1,0}=-\widetilde{\mathbf{G}}$ gives
\begin{align}
\mathbf{W}_{1,1}&=\left[
          \begin{array}{c}
           0.06109105203159421258866747384762992484681\\
           0.1832731560947826377660024215428897745405
          \end{array}
        \right]
,\nonumber\\
&\vdots\nonumber\\
\mathbf{W}_{1,6}&=\left[
          \begin{array}{c}
          0.07052369794346953586850993144509657323068\\
          0.2115710938304086076055297943352897196920
          \end{array}
        \right],
\end{align}
where $\Vert \mathds{F}_{1}(\mathbf{W}_{1,6})\Vert \leq  10^{-40}$. Finally, we obtain an approximate solution as
\[
y_{1}(x)=\widetilde{g}(x)+\mathbf{W}_{1} \mathcal{P} \mathbf{\Phi}(x)=\sqrt{x}+12.5664\times 10^{-40}.
\]
\begin{table}[ht]
\centering
\caption{The exact and approximate solutions at selected points for $N=4$, $\nu=1/2$ for Example \ref{DW}}
\label{pppoo}
\begin{tabular}{cc|c}
\hline\noalign{\smallskip}
x & & Numerical Results\\
\noalign{\smallskip}\hline\noalign{\smallskip}
0.1 &App. sol. &0.1316227766016837933199889354443271853503\\
     &Exa. sol&0.1316227766016837933199889354443271853372\\
     \hline
0.3 &App. sol.&0.4643167672515498340370909348402406402096\\
&Exa. sol&0.4643167672515498340370909348402406401858 \\
     \hline
0.5&App. sol. & 0.8535533905932737622004221810524245196736\\
&Exa. sol&0.8535533905932737622004221810524245196424 \\
     \hline
0.7 & App. sol. &1.285662018573852883584720418049631242609\\
&Exa. sol&1.285662018573852883584720418049631242575 \\
     \hline
0.9 & App. sol.& 1.753814968245462419639701256996834004134\\
&Exa. sol&1.753814968245462419639701256996834004104\\
\noalign{\smallskip}\hline
\end{tabular}
\end{table}
\begin{example}\label{DW}
Consider the problem
\[
\left\{
  \begin{array}{ll}
 D_{*0}^{\frac{1}{2}}y(x)=\displaystyle \frac{2\sqrt{x}}{\sqrt{\pi}}+\frac{3x\sqrt{\pi}}{4}-\frac{9}{10}+\displaystyle\int_{0}^{1}y(t)dt,\\
    y(0)=0,
  \end{array}
\right.
\]
with the non-smooth solution $y(x)=x^{\frac{3}{2}}+x$. The comparison between approximate and exact solution for $N=4$ and $\nu=1/2$ are reported in Table \ref{pppoo} and Fig. \ref{AWqq}. The absolute error obtained between the approximate solutions and the exact solution is plotted in Fig. \ref{AWqq}.  Table \ref{pppoo1} shows the absolute error of our method for $N=4,\ \nu=\frac{1}{2},1$,  Chebyshev-wavelet method \cite{Setia}, and Laguerre-collocation method \cite{Bayram}. The obtained results show that the approximate solution has a good agreement with the exact solution by using the fractional Chelyshkov polynomials.
\end{example}

\begin{figure}[h]
\begin{center}$
\begin{array}{cc}
  \includegraphics[width=0.5\linewidth]{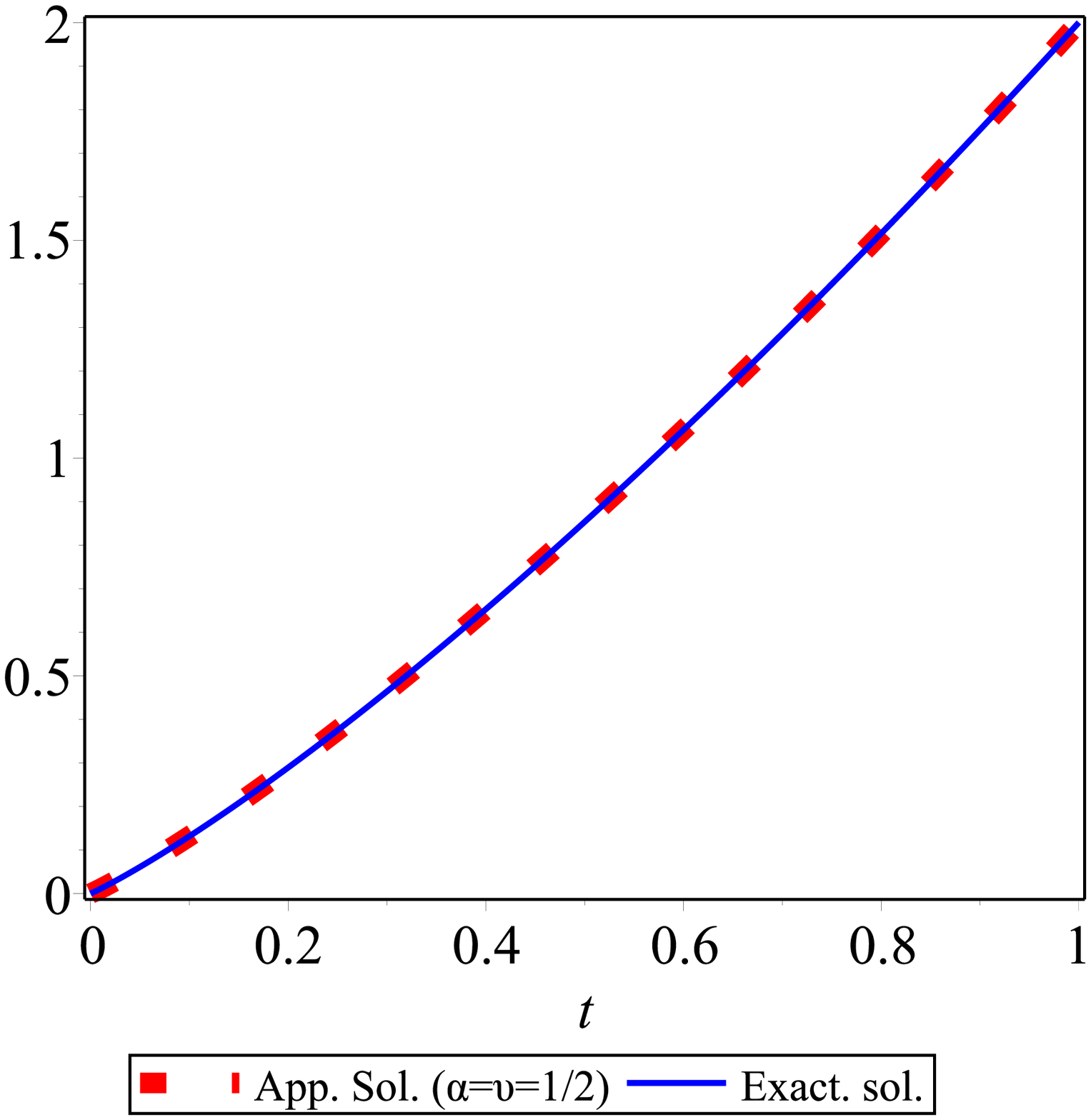}
  \includegraphics[width=0.5\linewidth]{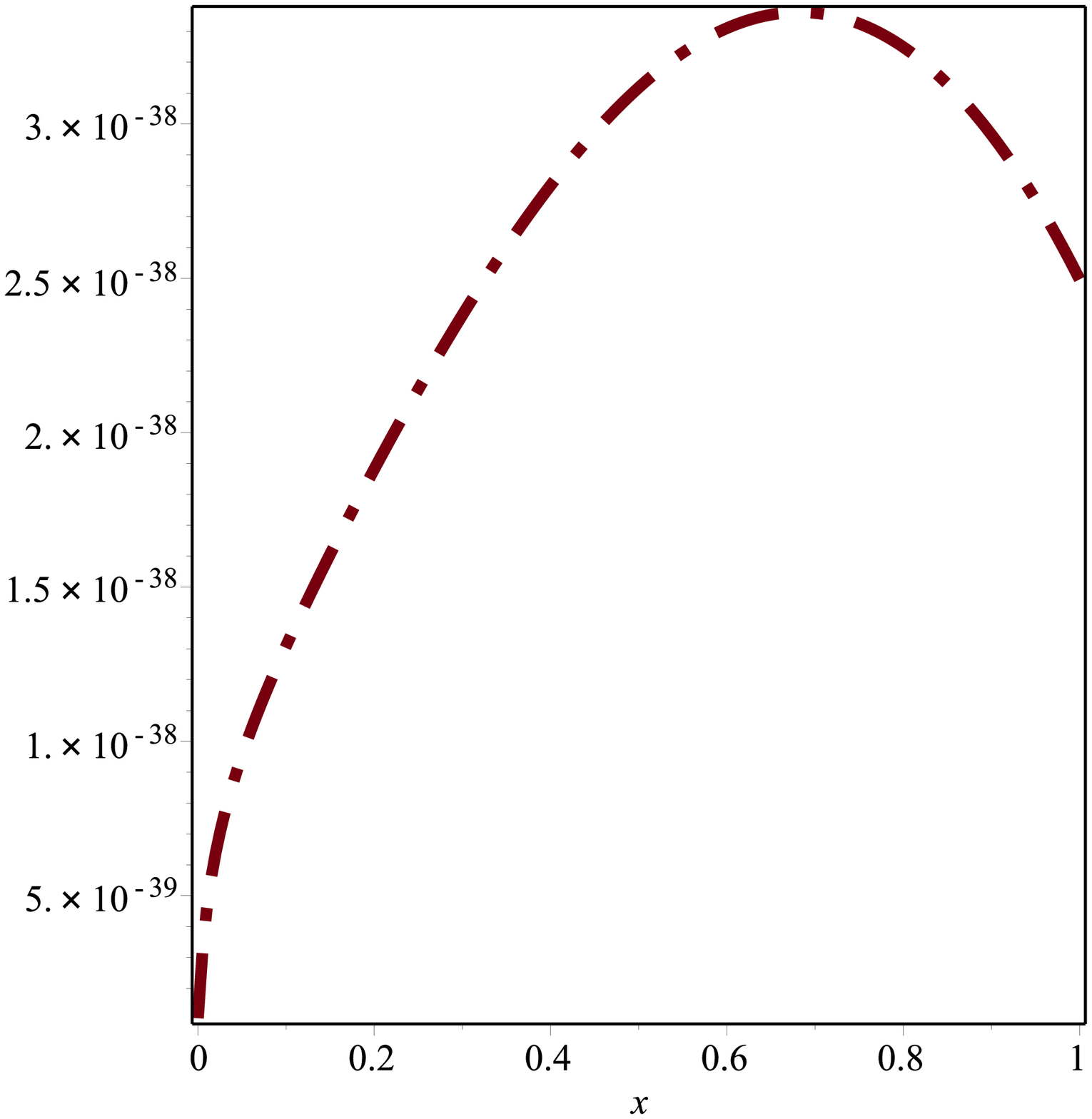}
\end{array}$
\end{center}
\caption{
The plot of the exact and approximate solution (left), the absolute error between the exact and approximation solution with $\nu=\frac{1}{2}$ (right) for Example \ref{DW}.}
\label{AWqq}
\end{figure}

\begin{table}[h]
\centering
\caption{The numerical results of Laguerre-collocation method \cite{Bayram}, Chebyshev-wavelet method \cite{Setia} and our method with  $N=4$ for Example \ref{DW}}
\label{pppoo1}
\begin{tabular}{c|c|c|c|c}
\hline\noalign{\smallskip}
x & Ref. \cite{Bayram} & Ref. \cite{Setia}&Our Method&Our Method\\
  &with $N=7$ & with $k=M=4$ &with $\nu=1$& with $\nu=1/2$\\
\noalign{\smallskip}\hline\noalign{\smallskip}
0.1 &9.5e-5 &1.67722e-3&1.31000e-4&1.31000e-38\\
0.3 & 2.4e-4&1.78323e-3&5.55703e-4&2.38000e-38 \\
0.5 & 6.7e-5&2.04661e-3&8.97348e-4&3.12000e-38 \\
0.7 &2.2e-4&2.33798e-3&7.88172e-5&3.40000e-38 \\
0.9 & 2.2e-4&2.58503e-3&5.16739e-4&3.00000e-38\\
\hline
CPU-Time&Not Reported&Not Reported&2.527s&2.449s\\\hline
$L^2$-error&3.8e-4&2.11330e-3&2.14371e-3&2.16e-38\\
\noalign{\smallskip}\hline
\end{tabular}
\end{table}

 \begin{example}\label{EEE2}
Consider the problem
\begin{equation}\label{KOK}
\left\{
  \begin{array}{ll}
    D_{*0}^{\alpha}y(x)=\displaystyle 1-\frac{x}{4}+\displaystyle\int_{0}^{1}xty^{2}(t)dt, \\
   y(0)=0.
  \end{array}
\right.
\end{equation}
The exact solution is $y(x)=x$ for $\alpha=1$. The approximate solution for various $\alpha=\nu=\frac{1}{4},\frac{1}{2},\frac{3}{4},1$ and $N=8$ are plotted in Fig. \ref{ETT}.  The numerical solutions converge to the solution of problem (\ref{KOK}) for $\alpha=1$ as $\alpha\rightarrow 1$.  In Table \ref{m34}, we report the  $L^2$-error obtained using proposed method and CAS wavelet basis \cite{Saeedi}, Chebyshev wavelet basis \cite{Zhu}, and rationalized Haar functions \cite{Rahimi} in the case of $\alpha= 1$.
 \end{example}

 \begin{figure}[h]
 \centering
  \includegraphics[width=0.7\textwidth]{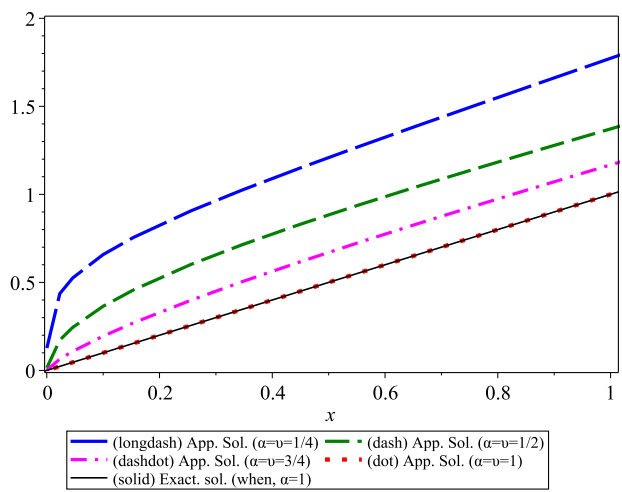}
\caption{The behavior of approximate solutions for various values of $\alpha$ along with the exact solution for Example \ref{EEE2}.}
\label{ETT}
\end{figure}

\begin{table}
\caption{The $L^{2}$-error comparison results between the our method and results of Refs. \cite{Zhu}, \cite{Saeedi} and \cite{Rahimi} in Example \ref{EEE2}}
\label{m34}
\begin{tabular}{lllll}
\hline\noalign{\smallskip}
 & Chebyshev wavelet  & CAS wavelets  & Rationalized Haar & Our method \\
  & \cite{Zhu} &\cite{Saeedi} & function \cite{Rahimi}&($\nu=1$)  \\
 &(k=5, M=2)& (k=4, M=1)  & (N=3, M=8)  &(m=6, N=2)\\\hline
Number of basis&   32&  48&24&2  \\
\noalign{\smallskip}\hline\noalign{\smallskip}
$L^{2}$-error & 1.1645e-9 &1.6745e-5&1.3561e-4&2.0525e-40\\\hline
CPU-Time&-&-&-&0.202s\\
\hline
\end{tabular}
\end{table}
\begin{table}[h]
\caption{The $L^{2}$-error with different basis for Example \ref{Typ}}
\label{m3}
\begin{tabular}{lllll}
\hline\noalign{\smallskip}
 & Chebyshev wavelet\cite{Zhu} & CAS wavelet \cite{Saeedi} &Alpert wavelets \cite{Hag}  \\
&  (k=5, M=2)&(k=4, M=1)&(r=4, J=6) \\
\noalign{\smallskip}\hline\noalign{\smallskip}
$\Vert E \Vert_{N}$ & $2.3374e-7$ &$5.3445e-6$&$1.7295e-5$\\\hline
CPU-Time&-&-&10.151s\\
\hline\noalign{\smallskip}
&Fractional-order &Our method  & Our method &   \\
&Lagrange polynomials \cite{Kumar} &(m=6, N=4)& (m=6, N=4) &   \\
&(n=4,\ $\nu=\frac{1}{2}$)& ($\nu=1$) &($\nu=\frac{1}{2}$) \\
\noalign{\smallskip}\hline\noalign{\smallskip}
 $\Vert E \Vert_{N}$&1.08395e-14& 1.5591e-3 &$1.6792e-39$\\\hline
CPU-Time&-&0.764s&0.765s\\
\noalign{\smallskip}\hline
\end{tabular}
\end{table}
\begin{figure}[h]
\begin{center}$
\begin{array}{cc}
\includegraphics[width=0.5\linewidth]{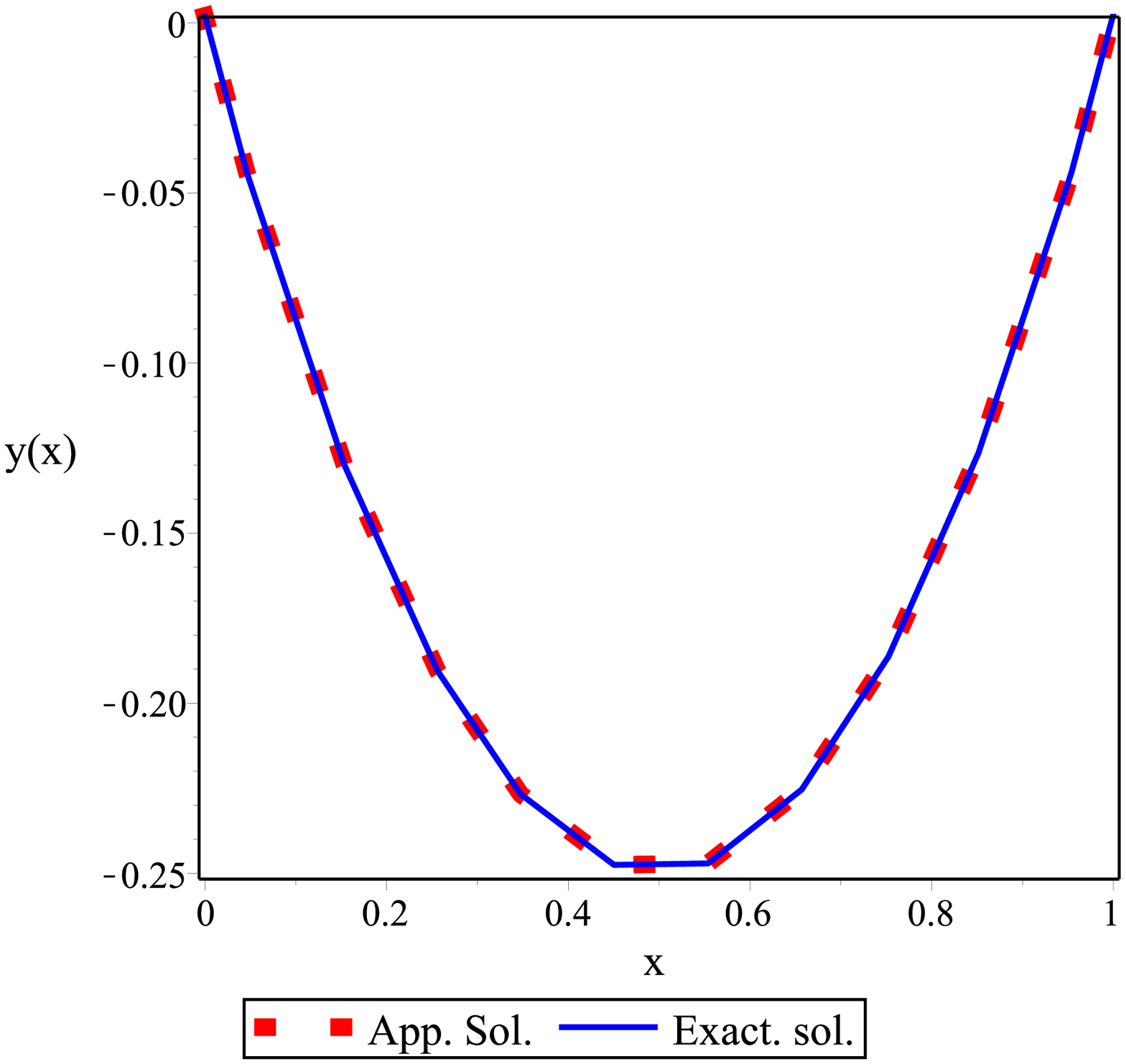}
\includegraphics[width=0.5\linewidth]{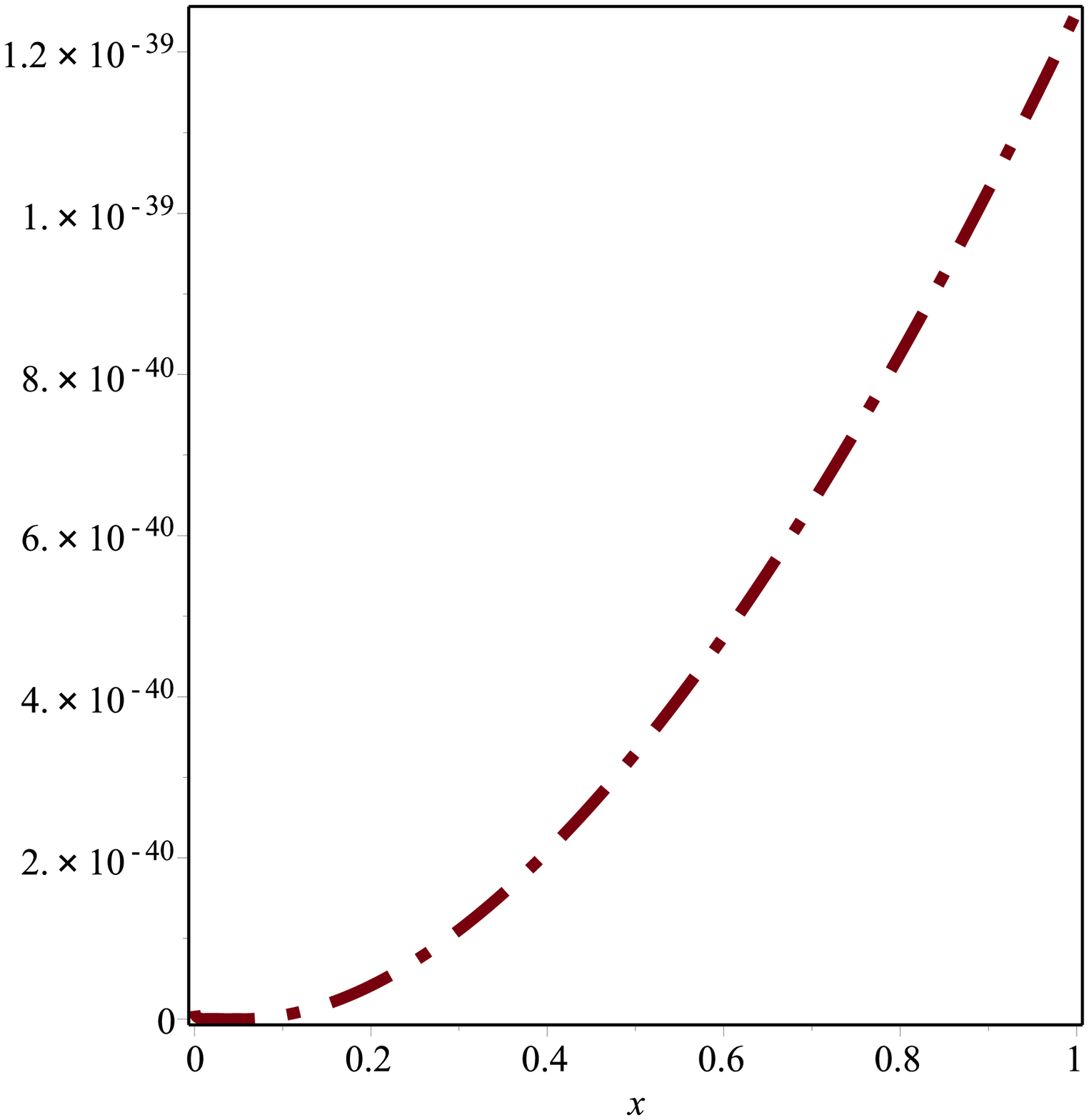}
\end{array}$
\end{center}
\caption{Plot of the exact and approximate solution (left), absolute error between the exact and approximation solution (right) with $\nu=\frac{1}{2} $ for Example \ref{Typ}.}
\label{Exp3}
\end{figure}

\begin{example}\label{Typ}
Consider the following integro-differential equation
\[
\left\{
  \begin{array}{ll}
 D_{*0}^{\frac{1}{2}}y(x)=\displaystyle\frac{1}{\Gamma(1/2)}\left(\frac{8}{3}x^{3/2}-2x^{1/2}\right)-\frac{x}{1260}+\displaystyle\int_{0}^{1}xty^{4}(t)dt,\\
    y(0)=0.
  \end{array}
\right.
\]
The exact solution is $y(x)=x^{2}-x$. Table \ref{m3} displays a comparison between our method and the CAS wavelet method \cite{Saeedi}, Chebyshev wavelet method \cite{Zhu} and Alpert multi-wavelets method \cite{Hag}.
Fig.  \ref{Exp3} shows the good agreement between the exact solution and approximate solution.
The obtained results show that our method by using a small number of fractional Chelyshkov basis produces a more accurate result compared to other basis functions.
\end{example}

\begin{example}\label{Typp}
Consider the problem
\[
\left\{
  \begin{array}{ll}
 D_{*0}^{\frac{1}{2}}y(x)=g(x)+\displaystyle\int_{0}^{1}sin(x+t)y^{2}(t)dt,\\
    y(0)=0,
  \end{array}
\right.
\]
with the non-smooth solution $y(x)= x^{\frac{1}{2}}-\frac{1}{3!}x^{\frac{3}{2}}+\frac{1}{5!}x^{\frac{5}{2}}$.
 Table \ref{TT11}, illustrates the $L^{2}$-errors obtained by our method for different values of $N$ and $\nu$ with $m=10$
  which are also shown in Fig. \ref{ETT44}.
  The obtained results confirm the efficiency and convergence of the method.
It can be seen that a significant improvement in the rate of
convergence of the method is obtained using the fractional Chebyshev
basis functions. The semi-log representation in Fig. \ref{ETT44}
shows the linear variations of the errors versus the degree of
approximation in case of $\nu=\frac{1}{2}$. This is so-called
exponential convergence or spectral accuracy of the collocation
methods that have been recovered in the proposed method for the
problems with non-smooth solutions.
 The absolute error at some selected points
with $N=10$ is reported in Table \ref{pppoou}.
\end{example}
\begin{table}[h]
\begin{center}
\caption{The $L^{2}$-errors for different values of $\nu$ and $N$
for Example \ref{Typp}}
 \begin{tabular}{c|cccccccc}
 \hline\noalign{\smallskip}
N &2&4 &6& 8&10   \\\hline
$\nu=1/4$&4.5944e-02&1.4297e-03&3.0806e-05&6.3394e-07&7.9853e-08\\
$\nu=1/2$&9.1157e-03&1.4795e-05&6.4717e-07&1.6866e-10&1.1612e-10\\
$\nu=3/4$&1.3978e-02&3.3550e-03&1.1277e-03&6.1575e-04&3.4911e-04\\
$\nu=1$ &2.1852e-02&6.0291e-03&2.9305e-03&1.5718e-03&8.4281e-04\\\hline
N &12 &14& 16&18&20   \\\hline
$\nu=1/4$&2.8281e-09&9.0679e-11&1.1469e-11&3.1578e-13&9.2724e-15\\
$\nu=1/2$&3.4954e-13 &9.8229e-15&4.5248e-17&4.6602e-19&2.7650e-21\\
$\nu=3/4$&1.9716e-04 &1.5580e-04&1.0786e-04&7.0349e-05&5.6620e-05\\
$\nu=1$&5.4258e-04 &4.5238e-04&3.7783e-04&2.8438e-04&2.0093e-04\\
\hline
\noalign{\smallskip}
\end{tabular}
\label{TT11}
\end{center}
\end{table}
\begin{figure}[h]
 \centering
  \includegraphics[width=0.8\textwidth]{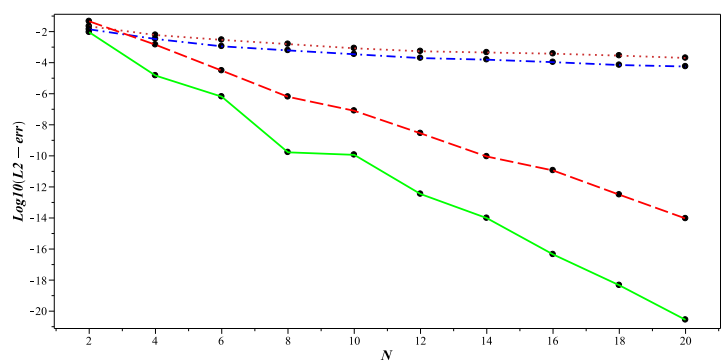}
\caption{The $L^{2}$-error for different values of $N$ with $\nu$=1/4 (dashed-lines), $\nu=1/2$ (solid-line), $\nu=3/4$ (dashed-dotted-lines) and $\nu=1$ (dotted-lines) for Example \ref{Typp}.}
\label{ETT44}
\end{figure}
\begin{table}[h]
\centering
\caption{Absolute errors at some selected points with $m=N=10$ for Example \ref{Typp}}
\label{pppoou}
\begin{tabular}{c|c|c|c|c}
\hline\noalign{\smallskip}
x & $\nu=1/4$ & $\nu=1/2$& $\nu=3/4$&$\nu=1$\\
\noalign{\smallskip}\hline\noalign{\smallskip}
0.1 &4.2857e-08 &3.1881e-11&4.0770e-4&5.6351e-4\\
0.3 &5.7263e-09&5.7105e-11&3.1744e-5&6.4303e-4 \\
0.5 & 6.7919e-08&4.0053e-11&8.3869e-5&4.3161e-4 \\
0.7 &6.7635e-09&7.2990e-11&1.2539e-5&5.3821e-4 \\
0.9 &3.4723e-08&7.7616e-11&1.9118e-4&6.8182e-4\\
\hline
CPU-Time&8.752s&8.331s&11.325s&4.524s\\
\noalign{\smallskip}\hline
\end{tabular}
\end{table}

\section{Conclusions}
In this paper, a new fractional version of the collocation method based on the fractional Chelyshkov polynomials has been introduced to solve a class of nonlinear fractional integro-differential equations. The operational matrix of fractional integrations with the spectral collocation method is utilized to convert the problem into a system of algebraic equations.
Finally, the proposed method was implemented to solve the fractional integro-differential equations with smooth and non-smooth solutions with accurate results.  The proposed method is computationally simple and the approximate solutions converges to the exact solution of the problem as the number of basis (fractional Chelyshkov polynomials) increases. Numerical examples illustrate that the obtained results are more significant than other existing methods.
\section*{Declarations}
{\bf{\small{Conflicts of Interest}}} All other authors have no conflicts of interest to declare.


\begin{thebibliography}{}
\bibitem{Kilbas}
A. A. Kilbas, H. M. Srivastava, J. J. Trujillo, Theory and applications of fractional differential equations, Elsevier: Amsterdam, The Netherlands,  (2006).
%%%%%%%%%%%%%%%%%%
\bibitem{9}
A. H. Bhrawy, M. A.  Zaky, Shifted fractional-order Jacobi orthogonal functions: Application to a system of fractional differential equations, Appl. Math. Model., 40(2), (2016), pp.~ 832--845.
%%%%%%%%%%%%%%%%%
\bibitem{Bayram}
A. Dascioglu, D. V. Bayram,  Solving fractional Fredholm integro-differential equations by Laguerre polynomials, Sains Malaysiana, 48(1), (2019), pp.~251--257.
%%%%%%%%%%%%%%%%%%
\bibitem{Setia}
A. Setia, Y. Liu, A. S. Vatsala,  Solution of linear fractional Fredholm integro-differential equation by using second kind Chebyshev wavelet,
11th International Conference on Information Technology: New Generations, (2014).
%%%%%%%%%%%%%%%%%
\bibitem{Canuto}
C. Canuto, M. Y. Hussaini, A. Quarteroni, T. A.  Zang, Spectral Methods: Fundamentals in Single Domains, Springer-Verlag: New York, (2006).
%%%%%%%%%%%%%%%%%
\bibitem{Sezer}
C. Oguza, M. Sezer, Chelyshkov collocation method for a class of mixed functional integro-differential equations, Appl. Math. comput. 259, (2015), pp.~943--954.
%%%%%%%%%%%%%%%%%
\bibitem{21}
C. I. Gheorghiu, Spectral methods for differential problems, Institute of Numerical Analysis, Cluj-Napoca, (2007).
%%%%%%%%%%%%%%%%%
\bibitem{Conte}
D. Conte, S. Shahmorad, Y. Talaei, New fractional Lanczos vector polynomials and their application to system of Abel–Volterra integral equations and fractional differential equations, J. Comput. Appl. Math., 366, (2020), 112409.
%%%%%%%%%%%%%%%%%
\bibitem{Kreyszig}
E. Kreyszig, Introductory functional analysis with applications, New York: Wiley, (1978).
%%%%%%%%%%%%%%%%
\bibitem{15}
E. H. Doha, W. M. Abd-Elhameed, Accurate spectral solutions for the parabolic and elliptic partial differential equations by the ultraspherical tau method. J. Comput. Appl. Math. 181, (2005), pp.~24--45.
%%%%%%%%%%%%%%%%
\bibitem{Bru}
H. Brunner, A. Pedas, G. Vainikko,  The piecewise polynomial collocation method for nonlinear weakly singular Volterra equations, Math. Comp. 68, (1999), pp.~1079--1095.
%%%%%%%%%%%%%%%%
\bibitem{Cai}
H. Cai, Y. Chen, A fractional order collocation method for second kind Volterra integral equations with weakly singular kernels, J. Sci. Comput. 75, (2018), 970-992.
%%%%%%%%%%%%%%%%%%
\bibitem{H1}
H. Brunner, Collocation methods for Volterra integral and related functional equations, Cambridge University Press, (2004).
%%%%%%%%%%%%%%%%
\bibitem {Saeedi}
H. Saeedi, M. M. Moghadam, N. Mollahasani , G. N.  Chuev,  CAS wavelet method for solving nonlinear Fredholm integro-differential equations of fractional order, Comm. Nonlinear. Sci. Numer. Simulat, 16(3), (2011), pp.~1154--1163.
%%%%%%%%%%%%%%%%
\bibitem{Medlock}
J. Medlock, M. Kot,  Spreading disease: integro-differential equations old and new,  Math. Biosci., 184, (2003),  pp.~201-222.
%%%%%%%%%%%%%%%%%
\bibitem{Talaei2}
J. S.  Ardabili, T. Talaei,  Chelyshkov collocation method for solving the two-dimensional Fredholm–Volterra integral equations, Int. J. Appl. Comput. Math. 4(25), (2018). https://doi.org/10.1007/s40819-017-0433-2
 %%%%%%%%%%%%%%%%
\bibitem{He}
J. H. He, Some applications of nonlinear fractional differential equations and their approximations, Bull. Sci. Technol. 15(2), (1999), pp.~86-90.
%%%%%%%%%%%%%%%%
\bibitem {Atkinson}
K. E. Atkinson, W. Han, Theoretical numerical analysis; a functional analysis framework, Springer, New York, (2001).
%%%%%%%%%%%%%%%%
\bibitem {Diethelm}
K. Diethelm, The analysis of fractional differential equations, Lectures notes in mathematics. Springer berlin, (2010).
%%%%%%%%%%%%%%%%
\bibitem {Kantorovich}
L. Kantorovich, G. Akilov, Functional analysis in normed spaces, Pergamon Press, New York, (1982).
%%%%%%%%%%%%%%%%
\bibitem{Zhu}
L. Zhu, Q. Fan,  Solving fractional nonlinear Fredholm integro-differential equations by
the second kind Chebyshev wavelet, Commun Nonlinear Sci Numer Simulat, 17(3), (2012), pp.~2333-2341.
%%%%%%%%%%%%%%%%
\bibitem{Zhao}
L. Huang, X. F. Li, Y. L. Zhao, X. Y. Duan, Approximate solution of fractional integro-differential equations by Taylor expansion method,  Comput. Math. Appl. 62(3), (2011), pp.~1127-1134.
%%%%%%%%%%%%%%%%
\bibitem{Izadi}
M. Izadi, S. Yüzbaşı, W. Adel, A new Chelyshkov matrix method to solve linear and nonlinear fractional delay differential equations with error analysis, Math. Sci. (2022), https://doi.org/10.1007/s40096-022-00468-y.
%%%%%%%%%%%%%%%%%
\bibitem{Hosseininia}
M. Hosseininia, M. H. Heydari, F. M. Maalek Ghaini, A numerical method for variable-order fractional version of the coupled 2D Burgers equations by the 2D Chelyshkov polynomials, 44(8), (2021), pp.~6482-6499.
%%%%%%%%%%%%%%%%%
\bibitem{Beilina}
N. Koshev, L. Beilina, An adaptive finite element method for Fredholm integral equations of the first kind and its verification on experimental data, centr.eur.j.math., 11, (2013), pp.~1489–1509.
%%%%%%%%%%%%%%%%%%
\bibitem{Hale}
N. Hale, Sh. Olver,  A fast and spectrally convergent algorithm for rational-order fractional integral and differential equations, SIAM J. Sci. Comput. 40(4), (2018), pp.~A2456-A2491.
%%%%%%%%%%%%%%%%%
\bibitem{Hale2}
N. Hale, An ultraspherical spectral method for linear Fredholm and Volterra integro-differential equations of convolution type,
IMA J. Numer. Anal., (2018), pp.~1-20.
%%%%%%%%%%%%%%%%%
\bibitem{Martin}
O. Martin, On the homotopy analysis method for solving a particle transport equation, Appl. Math. Model., 37(6), (2013), pp.~3959-3967.
%%%%%%%%%%%%%%%%%
\bibitem{Nevai}
P. Nevai, Mean Convergence of Lagrange Interpolation. III.,  Trans. Math. Soc., 282(2), 1984, pp.~669-988.
\bibitem{Anselone}
P. M. Anselone, Collectively compact operator approximation theory, Printice-Hall, Inc., Engelwood Cliffs, N. J., (1971).
%%%%%%%%%%%%%%%%
\bibitem{Bagley}
R. L. Bagley, P. J. Torvik, On the appearance of the fractional derivative in the behavior of real materials, J. Appl.
Mech. 51, (1984), pp.~294-298
%%%%%%%%%%%%%%%%%%
\bibitem {fixed}
R. P. Agarwal, M. Meehan,  Fixed point theory and applications, Cambridge University Press (2001).
%%%%%%%%%%%%%%%%
\bibitem{Hag}
Sh. P. Hag, E. Osgooei, E. Ashpazzadeh, Alpert wavelet system for solving fractional nonlinear Fredholm integro-differential equations, Comput. Methods Differ. Equ. 9(3), (2021), pp.~762-773.
%%%%%%%%%%%%%%%
\bibitem{Kemanci}
S. Yuzbası, M. Sezer, B. Kemanci, Numerical solutions of integro-differential equations and application of a population model with an improved Legendre method, Appl. Math. Model., 37(4), (2013), pp.~2086-2101.
%%%%%%%%%%%%%%%%
\bibitem{Kumar}
S. Kumar, V. Gupta, An approach based on fractional-order Lagrange polynomials for the numerical approximation of fractional order non-linear Volterra-Fredholm integro-differential equations, J. Appl. Math. Comput. (2022). https://doi.org/10.1007/s12190-022-01743-w.
%%%%%%%%%%%%%%%%
\bibitem{Noor}
S. Momani, M. Noor, Numerical methods for fourth order fractional integro-differential equations, Appl. Math. Comput. 182, (2006), pp.~754-760.
%%%%%%%%%%%%%%%%
\bibitem{Sloan}
S. Kumar, I. Sloan, A new collocation-type method for Hammerstein integral equations, Math. of Comp. 48, (1987), pp.~585-593.
%%%%%%%%%%%%%%%%
\bibitem{samad}
S. Noeiaghdam, S. Micula, A novel method for solving second kind Volterra integral equations with discontinuous kernel, Mathematics
 9, 2172, (2021), https://doi.org/10.3390/math9172172.
%%%%%%%%%%%%%%%
\bibitem{Chow}
T. S. Chow, Fractional dynamics of interfaces between soft-nanoparticles and rough substrates, Phys. Lett. A 342, (2005), pp.~148-155.
%%%%%%%%%%%%%%%%%
\bibitem {Chel}
V. S. Chelyshkov, Alternative orthogonal polynomials and quadratures, Electron. Trans. Numer. Anal. 25, (2006), pp.~17-26.
%%%%%%%%%%%%%%
\bibitem{Talaei}
Y. Talaei, M. Asgari, An operational matrix based on Chelyshkov polynomials for solving multi-order fractional differential equations, Neural. Comput. Appl.  30, (2018), pp.~1369--1376.
%%%%%%%%%%%%%%%
\bibitem{Talaei1}
Y. Talaei, Chelyshkov collocation approach for solving linear weakly singular Volterra integral equations, J. Appl. Math. Comput. 60, (2019), pp.~201--222.
%%%%%%%%%%%%%%%
\bibitem{Talaeim}
Y. Talaei, S. Micula, H.  Hosseinzadeh, S. Noeiaghdam, A novel algorithm to solve nonlinear fractional quadratic integral equations,
AIMS Mathematics, (2022), 7(7): 13237-13257. doi: 10.3934/math.2022730
%%%%%%%%%%%%%%%
\bibitem{Talaeid}
Y. Talaei, S. Shahmorad, P. Mokhtary, et al,  A fractional version of the recursive Tau method for solving a general class of Abel-Volterra integral equations systems, Fract. Calc. Appl. Anal., 25, 1553-1584, (2022). https://doi.org/10.1007/s13540-022-00070-y
%%%%%%%%%%%%%%%
\bibitem{Rahimi}
Y. Ordokhani,  N. Rahimi, Solving fractional Nonlinear Fredholm integro-differential equations via hybrid of rationalized Haar function, J. Inf. Comput. Sci., 9(3), (2014), pp.~169--180.
%%%%%%%%%%%%%%%
\bibitem{Wang}
Y. Wang, L.  Zhu, Z. Wang, Fractional-order Euler functions for solving fractional integro-differential equations with weakly singular kernel,  Adv. Differ. Equ. 2018, 254 (2018),  https://doi.org/10.1186/s13662-018-1699-3.
%%%%%%%%%%%%%%%
\bibitem{Odibat}
Z. M. Odibat, N. T. Shawagfeh, Generalized Taylor's formula, Appl. Math. Comput. 186, (2007), pp.~286-293.
%%%%%%%%%%%%%%%%%
\bibitem{Jackiewicz}
Z. Jackiewicz, M. Rahman, B. D. Welfert, Numerical solution of a Fredholm integro-differential equation modelling  $\theta$-neural networks,
Appl. Math. Comput., 195, (2008), pp.~ 523-536.


\end{thebibliography}
\end{document}